\let\oldc\c
\newcommand     {\C}    {\mathds C}
\newcommand     {\R}    {\mathds R}
\renewcommand   {\S}    {\mathbb S}
\newcommand     {\N}    {\mathds N}
\newcommand     {\eps}  {\varepsilon}
\renewcommand   {\phi}  {\varphi}
\newcommand     {\mo}           {{\text{-}1}}
\newcommand     {\rmd}          {\,\mathrm d}
\newcommand     {\dx}           {\,\mathrm dx}
\renewcommand   {\d}            {\partial} 
\newcommand     {\del}[2]       {\frac{\d #1}{\d #2}}
\newcommand     {\tot}[2]       {\frac{\mathrm d #1}{\mathrm d #2}}
\newcommand     {\secdel}[2]    {\frac{\partial^2 #1}{\partial^2 #2}}
\newcommand     \tr     {\operatorname{tr}}
\newcommand     \st     {\operatorname{s.t.}}
\newcommand     \diag   {\operatorname{diag}}
\newcommand     \PML    {\text{PML}}
\newcommand     \sign   {\operatorname{sign}}
\renewcommand   \mod    {\operatorname{mod}}
\renewcommand   \Re     {\operatorname{Re}}
\renewcommand   \Im     {\operatorname{Im}}
\newcommand     \adj    {\operatorname{adj}}
\renewcommand   \b      {\boldsymbol}
\newcommand     \B      {{\boldsymbol B}}
\renewcommand   \u      {{\boldsymbol u}}
\newcommand     \Y      {\boldsymbol Y}
\newcommand     \D      {\boldsymbol D}
\newcommand     \E      {\boldsymbol E}
\newcommand\rmG{\mathrm G}
\newcommand{\Jphys}                 {\ensuremath{J^p}\xspace}
\newcommand{\Jreg}                  {\ensuremath{J^r}\xspace}
\newcommand{\Jgray}                 {\ensuremath{J^g}\xspace}
\newcommand{\Jphysdisc}             {\ensuremath{J^p_h}\xspace}
\newcommand{\Jregdisc}              {\ensuremath{J^r_h}\xspace}
\newcommand{\Jgraydisc}             {\ensuremath{J^g_h}\xspace}
\newcommand{\nonsep}                {\ensuremath{f}\xspace}
\newcommand{\hyperApproxArg}[2]     {\ensuremath{\nonsep^{#1}_{#2}}\xspace}
\newcommand{\partHyperApprox}[4]    {\ensuremath{\nonsep^{#1,#2}_{#3,#4}}\xspace}
\newcommand{\constHyperApprox}[3]   {\ensuremath{c^{#1}_{#2,#3}}\xspace}
\newcommand{\hyperApprox}           {\ensuremath{\hyperApproxArg{\tau}{\bar\B}}\xspace}
\newcommand{\partHyperApproxIR}     {\ensuremath{\partHyperApprox{i}{\tau}{\bar\B}{R}}\xspace}
\newcommand{\partHyperApproxII}     {\ensuremath{\partHyperApprox{i}{\tau}{\bar\B}{I}}\xspace}
\newcommand{\partHyperApproxIs}     {\ensuremath{\partHyperApprox{i}{\tau}{\bar\B}{s}}\xspace}
\newcommand{\constHyperApproxIR}    {\ensuremath{\constHyperApprox{i}{\bar\B}{R}}\xspace}
\newcommand{\constHyperApproxII}    {\ensuremath{\constHyperApprox{i}{\bar\B}{I}}\xspace}
\newcommand{\constHyperApproxIs}    {\ensuremath{\constHyperApprox{i}{\bar\B}{s}}\xspace}
\newcommand{\Jtotal}                {\ensuremath{\mathcal J}\xspace}
\newcommand{\totalHyperApprox}      {\ensuremath{\mathcal J^\tau_{\bar \B}}\xspace}
\newcommand{\totalHyperApproxArg}[2]{\ensuremath{\mathcal J^{#1}_{#2}}\xspace}
\newcommand{\constParamApproxIRU}   {\ensuremath{C^{i,R}_{U}}\xspace}
\newcommand{\constParamApproxIsU}   {\ensuremath{C^{i,s}_{U}}\xspace}
\newcommand{\constParamApproxIRL}   {\ensuremath{C^{i,R}_{L}}\xspace}
\newcommand{\constParamApproxIsL}   {\ensuremath{C^{i,s}_{L}}\xspace}
\newcommand{\noElements}            {\ensuremath{{N_T}}\xspace}
\newcommand{\noDesign}              {\ensuremath{{K}}\xspace}
\newcommand{\noEdges}               {\ensuremath{{N_E}}\xspace}
\newcommand{\noDoF}                 {\ensuremath{{N_p}}\xspace}
\newcommand{\noDir}                 {\ensuremath{{N_d}}\xspace}
\newcommand{\noWaveL}               {\ensuremath{{N_\omega}}\xspace}
\newcommand{\indexEdges}            {\ensuremath{\mathcal I_E}\xspace}
\newcommand{\indexDesign}           {\ensuremath{\mathcal I_D}\xspace}
\newcommand{\Param}                 {\ensuremath{\psi}\xspace}
\newcommand{\Triang}                {\ensuremath{\boldsymbol T}\xspace}
\newcommand{\TriI}[1]               {\ensuremath{T_{#1}}\xspace}
\newcommand{\RHS}                   {\ensuremath{H}\xspace}
\newcommand{\SYS}                   {\ensuremath{S}\xspace}
\newcommand{\TangCone}              {\ensuremath{\mathcal T}\xspace}
\newcommand{\GraphSet}              {\ensuremath{\mathcal G}\xspace}
\newcommand{\AdmSet}                {\ensuremath{\mathcal B}\xspace}
\newcommand{\EdgeImage}             {\ensuremath{\mathcal E}\xspace}
\newcommand{\NodeImage}             {\ensuremath{\mathcal N}\xspace}
\newcommand{\SC}                    {\ensuremath{\S_{\C}}\xspace}
\newcommand{\SCK}                   {\ensuremath{\S^{\noDesign}_{\C}}\xspace}
\newcommand{\SR}                    {\ensuremath{\S_{\R}}\xspace}
\newcommand{\ie}                    {{i.\,e.}\xspace}
\newcommand{\FOOptConst}            {\ensuremath{\nu}}
\newtheorem{remark}[theorem]{Remark}
\newtheorem{ass}[theorem]{Assumption}
\title{Material Optimization in Transverse Electromagnetic Scattering Applications}
\author{%
    Johannes Semmler%
    \thanks{Applied Mathematics 2, Friedrich-Alexander University Erlangen-N\"urnberg (FAU), Germany (\email{johannes.semmler@fau.de})}%
    \and Lukas Pflug%
    \and Michael Stingl}
    \algrenewcommand\algorithmicthen{}
\newcommand\corr[1]{#1}
\begin{document}\maketitle\begin{abstract} A class of algorithms for the solution of discrete material optimization problems in electromagnetic applications is discussed. \corr{The idea behind the algorithm is similar to that of the sequential programming. However,} in each major iteration a model is established on the basis of an appropriately parametrized material tensor. The resulting nonlinear parametrization is treated on the level of the sub-problem, for which, \corr{globally optimal solutions can be computed due to the block separability of the model}. Although global optimization of non-convex design problems is \corr{generally} prohibitive, a \corr{well chosen} combination of analytic solutions along with standard global optimization techniques leads to a very efficient algorithm \corr{for most} relevant material parametrizations. A global convergence result for the overall algorithm is established. The effectiveness of the approach in terms of \corr{both computation time and solution quality} is demonstrated by numerical examples\corr{,} including the optimal design of cloaking layers for a nano-particle \corr{and} the identification of multiple materials with different optical properties in a matrix. \end{abstract} \begin{keywords} material optimization, discrete optimization, global optimization, sequential programming, Helmholtz equation, electromagnetic scattering, inverse problems, optical properties \end{keywords} \begin{AMS} 35Q60, 35R30, 90C26, 90C35, 90C90 \end{AMS}\section{Introduction} Problems of material optimization governed by Maxwell's equation \corr{have recently been} studied in the literature. In particular, for time-harmonic electromagnetic fields we refer to \cite{Diaz2010}, where an optimal distribution of two materials with distinct properties was computed based on the \corr{so-called} SIMP approach \cite{Bendsoe1994}\corr{. This approach} was originally developed for the topology optimization of elastic structures and is based on interpolation between the desired material properties and an appropriate penalization scheme rendering undesired intermediate material properties unattractive with respect to the particular cost function. A similar technique has been applied to the transient problem discussed, \corr{for example} in \cite{Hassan2014}. Again the goal here was to find an optimal distribution of two isotropic materials. Potential applications of structural optimization techniques in the context of electromagnetics range from inverse problems, where distribution of material is reconstructed by the information given by the scattered electromagnetic fields \cite{OhinKwon2002}, to optimal material layout to improve the properties of optical devices \cite{Byun2004} or nanoparticles \cite{Pendry}. \par In this paper we are interested in a more general class of material optimization problems, in the framework of which a complex-valued permittivity tensor for a given point in the design domain is specified by a function of a finite number of parameters. Particular \corr{realizations have led} to problems of free material optimization \cite{Zowe1997,Greifenstein2016,Bendsoe1994,Ringertz1993}, which \corr{have so far} been studied solely in the context of linear elasticity, to optimal material orientation problems, \corr{see} e.g. \cite{Pedersen1989}, and to so called discrete multi-material optimization as treated in literature by so called DMO methods, see, e.g. \cite{Stegmann2005,Hvejsel2011}. Rather than formulating the optimization problems \corr{directly in the design parameters} and using a derivative based optimization algorithm like SNOPT \cite{Gill2005a} or MMA \cite{Svanberg1987} in a ``black box'' way, in this article, a new algorithmic concept for the solution of the envisaged class of design problems is developed. The motivation for the development of \corr{this} new solution approach is \corr{the fact that} the material tensors typically depend on the design parameters in a non-linear way and thus the parametrization may \corr{result in numerous} poor local optima, see \cite{Pedersen1989}, in which algorithms applied in a black-box way may \corr{become} trapped. \par In order to prevent this, the following concept is suggested: the \corr{principal} idea is to formulate the design problem \corr{directly in terms of the material tensors}, while the associated parametrization is hidden in the definition of the admissible set. Then, in the course of a sequential approximation algorithm, FMO-type models \corr{(see \cite{Stingl2009a})} of the objective \corr{as a} function of the material tensors are derived and \corr{are used to generate a sequence of sub-problems.} Due to the potentially non-convex parametrization each sub-problem is a constrained non-linear optimization problem, which may exhibit an unknown number of local optima. We show that based on the properties of the particular approximations these sub-problems can be solved to global optimality with a reasonable effort, partially with \corr{the} analytical solution, for important classes of parametrizations. \par The manuscript is structured as follows: In \cref{sec:prereq} the Helmholtz-type state equation, based on the time-harmonic Maxwells equation, is given in its weak formulation and the dependency on the material tensor is \corr{highlighted}. Then, in \cref{sec:optimization} the class of optimization problems of interest is stated\corr{,} including a detailed description of the general structure of the objective function as well as \corr{the} structure of the set of admissible materials, which is based on a graph. The discussion is continued \corr{with} a short note on the discretization of the state and the optimization problem as well as regularization issues. \corr{\Cref{sec:algorithm}} \corr{constitutes} the heart of this article. Based on convex first-order hyperbolic approximation as well as a \corr{so-called} sequential global programming technique, an optimization algorithm is \corr{stated for} which a global convergence result can be established. Subsequently, parametrization-dependent solutions \corr{to the} sub-problems taking the graph-structure of the admissible set into account are derived. To show the capabilities of the algorithm\corr{,} two examples are discussed in \cref{sec:example}. These \corr{include both} the design of a cloaking for a scatterer made from an increasing number of anisotropic materials \corr{and the} tomographic reconstruction of an unknown material distribution consisting of a background material, a dielectric and an absorbing material. \par Throughout this paper, we indicate by $\SC$ the space of symmetric, two\hyp{}dimensional and complex-valued tensors. The term \corr{$\langle A,B \rangle := \Re(\tr(A^H B))$ denotes the standard scalar product in $\SC$ and $\| A \|^2_F = \langle A,A\rangle$ denotes the induced Frobenius norm}. \par For a real-valued continuously differentiable function $v:\SC \to \R$ we define the derivative of $v$ in a direction $Y\in \SC$ with respect to $B$ as \begin{align*} \del{v(B)}{B}[Y] := \lim_{\nu \to 0} \frac{v(B+\tau Y) - v(B)}{\nu}. \end{align*} We note that for the tuple $\B\in \SCK$ with $\noDesign\in \N$ and a real-valued continuously differentiable function $v:\SCK \to \R$\corr{, the} directional derivative of $v$ in direction $\Y \in \SCK$ with respect to $\B$ is given as \begin{align*} \del{v(\B)}{\B}[\Y] = \sum_{i=1}^\noDesign \del{v(\B)}{(\B)_i}[(\Y)_i]. \end{align*} Finally, we define the extended norm $\|\B\|^2_{F^\noDesign} := \sum_{i=1}^\noDesign \|(\B)_i\|^2_F$ for $\B\in \SCK$.\section{Prerequisites}\label{sec:prereq} The propagation of electromagnetic waves is described by Maxwell's equation \cite{Jackson1963}. In this paper we restrict ourselves to the time-harmonic propagation of so-called transverse magnetic waves (TM) for a given wavenumber $\omega$, where we assume that the electromagnetic field is given by a scalar function depending only on two spatial dimensions. With these assumptions, Maxwell's equation simplifies to the Helmholtz equation for the magnetic field. \par The relative permittivity $\eps$, which in this article is the material property of interest, is a complex- and tensor-valued function of space. For modeling purposes an additional tensor valued function $B$ is introduced\corr{,} whose values are given by the inverse of the permittivity \corr{at} each point. In general, we assume that the material tensor at a point is symmetric, \ie $B: \R^2 \to \SC$. For scattering applications an incident magnetic field $u_I:\R^2\to \C$ is given, which solves Maxwell's equation for the given background material $B_b: \R^2 \to \SC$. \corr{The} Helmholtz equation is actually defined \corr{on the whole of} $\R^2$, thus we introduce a perfectly matched layer (PML) \cite{Berenger1994} surrounding the domain of interest\corr{,} including the scattering object. \par \begin{figure}[t]\centering \includegraphics{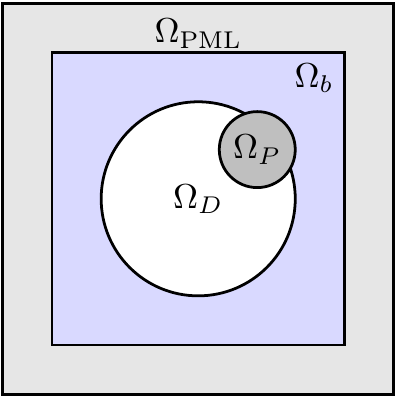} \caption{General geometrical setting: PML, background and design domain} \label{fig:domains} \end{figure} \par The computational domain $\Omega = \Omega_D \cup \Omega_C\subset\R^2$ is subdivided into a design domain $\Omega_D$ and a non-design domain $\Omega_C$. \corr{The} non-design domain $\Omega_C = \Omega_\PML\cup \Omega_b \cup \Omega_P$ \corr{in turn} consists of three subdomains. The perfectly matched layer $\Omega_\PML$ \corr{completely} encloses the background domain $\Omega_b$ and both are equipped with background material tensor-valued function $B_b$. Moreover\corr{,} the scatterer domain $\Omega_P$ and the design domain $\Omega_D$ are both embedded in the background domain $\Omega_b$ (see \cref{fig:domains}). The tensor valued function $B_P:\Omega_P\to\SC$ in the scatterer domain is assumed to be independent of the design, whereas the tensor function $B:\Omega_D\to \SC$ associated with the design domain $\Omega_D$ will be subject to optimization. For the sake of notation, we combine both functions to the piecewise tensor-valued function $B_C:\Omega_C\to \SC$ with \begin{align*} B_C & = \begin{cases} B_b & \text{ in } \Omega_b \cup \Omega_\PML, \\ B_P & \text{ in } \Omega_P. \end{cases} \end{align*} Using this\corr{,} we state the Helmholtz equation in weak form: \begin{align}\label{eqn:weakbilinear} {\text{Find } u \in H^1_0(\Omega,\C) \ \st } & a(B;u,\phi) + a_C(u,\phi) = l(B;\phi)+l_C(\phi) \quad \text{ for all } \phi\in H^1_0(\Omega)\corr{.} \nonumber \end{align} Here, we explicitly point out the dependency on $B$ and note the subdivision of the bilinear and linear forms into design domain contributions \begin{align*} a(B;u,\phi) & = \int_{\Omega_D} B \nabla u \cdot \nabla \phi - \omega^2 u\phi \dx, & l(B;\phi) & = -\int_{\Omega_D} B\nabla u_I\cdot \nabla \phi \dx \end{align*} and non-design domain contributions \begin{align*} a_C(u,\phi) & = \int_{\Omega_C} B_C A^\omega_\eps \nabla u \cdot \nabla \phi - \omega^2 A^\omega_\mu u\phi \dx, \\ l_C(\phi) & = \int_{\Omega} B_b\nabla u_I\cdot \nabla \phi \dx-\int_{\Omega_C} B_C \nabla u_I\cdot \nabla \phi \dx. \end{align*} By the definition of $B_C$ we observe that the right hand side of \eqref{eqn:weakbilinear} \corr{vanishes} in $\Omega_b\cup \Omega_\PML$. The functions $A^\omega_\eps$ and $A^\omega_\mu$ describe the wavelength dependent PML \cite{Monk2003} based on a squared layer with distance $d$ from the origin and are defined as follows: \begin{align*} A^\omega_\epsilon(x,y) &= \begin{pmatrix} \frac{s(y)}{s(x)} & 0 & \\ 0 & \frac{s(x)}{s(y)} \end{pmatrix}, & A^\omega_\mu(x,y) & = s(x) s(y), & s(t) &= 1 - \frac{ \sigma_0 \max(0,|t|-d)}{\imath\omega}. \end{align*} Particular choices of the positive scalar $\sigma_0$ and $d$ depend on the particular application, see \cref{sec:example}. The definition of $s$ implies that $A^\omega_\epsilon\equiv\mathds1$ and $A^\omega_\mu \equiv 1$ in $\Omega \setminus \Omega_\PML$. \section{A general material optimization problem}\label{sec:optimization} \par \begin{figure}\centering \scalebox{0.7}{ \includegraphics[]{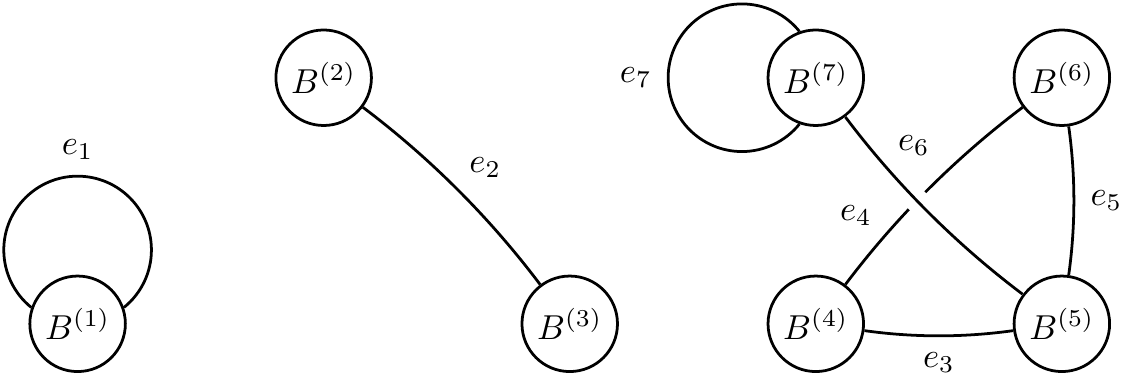} } \caption{Example graph of admissible set $\GraphSet$}\label{fig:graph} \end{figure} \par We start with the description of the set of admissible material tensors $\GraphSet$, which is structured by a graph $(V,E)$ with vertices $V$ and edges $E = \{e_1,\dotsc,e_\noEdges\} \subset V\times V$. We assume that every vertex $v\in V$ is part of at least one edge and is associated with a predefined material tensor $B^{(v)}\in \SC$, cf. \cref{fig:graph}. We further define $\NodeImage:=\{B^{(v)}\mid v\in V\}$, $\indexEdges := \{1,\dotsc, \noEdges\}$ and introduce the following parametrization: \begin{definition}[parametrization of $\GraphSet$]\label{ass:properties} We call the mapping $\Param: \indexEdges \times [0,1] \to \SC$ parametrization of $\GraphSet$ iff the following holds: \begin{itemize} \item $\Param$ is twice continuously differentiable with respect to the second variable. \item interpolation property: \begin{equation*} \Param(l,0) = B^{e_l^{(1)}} \text{ and } \Param(l,1) = B^{e_l^{(2)}} \quad \forall l\in\indexEdges, \end{equation*} where $e_l^{(1)}$ and $e_l^{(2)}$ denote the first and second node of the $l$-th edge, respectively. \item $\Param$ is injective on $\indexEdges\times(0,1)$, \ie \begin{equation*} \forall (k,s),(l,t)\in \indexEdges\times(0,1): \Param(k,s) = \Param(l,t) \Rightarrow (k,s)=(l,t). \end{equation*} \end{itemize} \end{definition} We denote \corr{the image generated by the parametrization on the $l$-th edge by $\EdgeImage_l$}, \ie \begin{align*} \EdgeImage_l = \{\Param(l,\delta)\mid \delta \in [0,1]\}, \quad \forall l\in\indexEdges. \end{align*} Thus the set of admissible material tensors can be written as \begin{align*} \GraphSet = \{ \Param(l,\delta) \mid (l,\delta) \in \indexEdges\times [0,1] \} = \EdgeImage_1 \cup \dotsb \cup \EdgeImage_\noEdges \end{align*} and the set $\rmG$ of admissible tensor-valued material distribution reads as \begin{equation*} \rmG = \{ B :\Omega_D \to \SC \mid B(x) \in \GraphSet \text{ for } x \in \Omega_D\}. \end{equation*} Particular choices of $\rmG$ are given in \cref{sec:example}. In the following, we use the notation $\Param_l(\delta):=\Param(l,\delta)$ for all $l\in \indexEdges, \delta\in [0,1]$. Using this, we state an optimal design problem of tensor-valued coefficients over a Helm\-holtz-type equation in the domain $\Omega$ as follows: \begin{equation} \label{eqn:opt:prob}\left\{\quad \begin{aligned} \min_{B\in \rmG} ~ & \Jphys(B, u) + \eta \Jreg(B) + \gamma \Jgray(B) \\ \st~& u\in H^1_0(\Omega,\C) \text{ is the solution of } & \\ & \begin{multlined} a(B;u,\phi) + a_C(u,\phi) = l(B;\phi)+l_C(\phi) \quad \text{ for all } \phi\in H^1_0(\Omega) \end{multlined} \end{aligned}\right. \end{equation} The real-valued function $\Jphys$ is called \corr{the} objective functional and \corr{is} assumed to be Gateaux-differentiable on $\rmG \times H^1_0(\Omega,\C)$ with respect to \corr{both the} complex- and tensor-valued material distribution $B$ and the state variable $u$. \par Moreover\corr{,} $\Jreg$ and $\Jgray$ are Gateaux-diffentiable functionals \corr{that} penalize irregular and undesired material distributions $B$, respectively, and $\eta$ and $\gamma$ are non-negative scalars. We refer to \cref{sec:reg,sec:subprobsolve} for precise definitions. \subsection{Discretization}\label{sec:discrete} Let $\Triang$ be a regular triangulation of $\Omega$ with $\noElements$ triangular elements $\TriI{i}$, $1\le i\le \noElements$\corr{, where} the first $\noDesign$ triangles are located in the design domain $\Omega_D$. We assume that all subdomains in $\Omega$ are exactly approximated by the triangulation. \par The tensor-valued function $B$ is assumed to be constant on each triangle $\TriI{i}$, $i\in \indexDesign:=\{1,\dotsc,\noDesign\}$ of the design domain, \ie $B(x)| _{\Omega_D} = B_h(x):= \sum_{i=1}^\noDesign (\B)_i \chi_{\TriI{i}}(x)$ with a tuple $\B\in \SCK $ and the characteristic function $\chi_{\TriI{i}}$ of triangle $\TriI{i}$. Furthermore\corr{,} the state $u:\Omega\to \C$ is approximated by $u_h(x):=\sum_{i=1}^\noDoF (\u)_i \phi_i(x)$ with $\noDoF$ degrees of freedom entering the coefficient vector $\u \in \C^\noDoF$. In summary\corr{,} the optimization problem \eqref{eqn:opt:prob} is approximated by \begin{align}\label{eqn:opt:probTMdis} \left . \begin{aligned} \min_{\B \in \AdmSet} ~ & \Jphysdisc(\B, \u ) + \eta \Jregdisc(\B) + \gamma \Jgraydisc(\B) \\ \st ~ & (\SYS(\B) + \SYS_C ) \u = (\RHS(\B) + \RHS_C) \end{aligned}\qquad \right. \end{align} with the admissible set $\AdmSet := (\GraphSet)^\noDesign$ and discretized versions of the objective functional $\Jphysdisc(\B,\u) := \Jphys(B_h,u_h)$, regularization functional $\Jregdisc(\B):=\Jreg(B_h)$ and penalization functional $\Jgraydisc(\B):=\Jgray(B_h)$. The PDE constraint is approximated by a system of equations with the symmetric matrices $\SYS(\B),\SYS_C\in \C^{\noDoF\times\noDoF}$, which are defined entry-wise by \begin{align*} (\SYS(\B))_{ij} & = \sum_{k=1}^\noDesign\int_{\TriI k} B_k \nabla \phi_i \cdot \nabla \phi_j -\omega^2 \phi_i\phi_j \rmd x , & \text{for all } 1\le i,j \le \noDoF \\ (\SYS_C)_{ij} & = \int_{\Omega_C} B_C A^\omega_\epsilon \nabla \phi_i \cdot \nabla \phi_j - \omega^2 A^\omega_\mu \phi_i\phi_j \rmd x & \text{for all }1\le i,j \le \noDoF. \end{align*} Finally, the right hand side vectors $\RHS(\B),\RHS_C \in \C^\noDoF$ are defined by \begin{align*} (\RHS(\B))_{i} & = \sum_{k=1}^\noDesign\int_{\TriI k} B_k \nabla u_I\cdot \nabla \phi_i \rmd x & \text{for all } 1\le i \le \noDoF \\ (\RHS_C)_{i} & = \int_{\Omega_C} B_C\nabla u_I\cdot \nabla \phi_i \rmd x - \int_{\Omega} B_b \nabla u_I\cdot \nabla \phi_i \rmd x & \text{for all } 1\le i,j \le \noDoF. \end{align*} We note that the solution $\u$ of the discretized state problem \begin{align}\label{eqn:state:algebraic} (\SYS(\B) + \SYS_C ) \u = \RHS(\B) + \RHS_C \end{align} is uniquely defined by the material tuple $\B \in \AdmSet$ and thus we can rewrite the discretized optimization problem \eqref{eqn:opt:probTMdis} as \begin{align}\label{eqn:opt:probTMdisred} \min_{\B\in\AdmSet}~\Jtotal(\B) \tag{\text{$P^{\gamma}_h$}} \end{align} with the objective functional $\Jtotal(\B) := \Jphysdisc(\B, \u(\B)) + \eta \Jregdisc(\B) + \gamma \Jgraydisc(\B)$, where $\u(\B)$ is the unique solution of \cref{{eqn:state:algebraic}}. For later use\corr{,} we define $\nonsep(\B):= \Jphysdisc(\B, \u(\B)) + \eta \Jregdisc(\B)$ and \corr{briefly} note that the derivative of $\Jphysdisc(\B,\b u( \B))$ with respect to $\B$ can be computed by adjoint calculus or \corr{using the} implicit function theorem.\subsection{Regularizations}\label{sec:reg} In this section we present a possible regularization on the material distribution $B:\Omega_D\to \SC$. \corr{To do this,} we define the filtered material distribution \begin{align*} \tilde B(x) := \frac{\int_{\Omega_D} \kappa(x-y)B(y) \rmd y}{\int_{\Omega_D} \kappa(x-y)\rmd y} \end{align*} with a filter kernel $\kappa:\R^2\to\R$. For instance $\kappa$ can be defined as a "circular filter" $x\mapsto \max(0,r_0-\|x\| )$ with filter radius $r_0$. \corr{In this way,} we define a tracking-type filter regularization term on the design domain $\Omega_D$ \begin{align*} \Jreg(B) = \int_{\Omega_D} \big \| B(x) - \tilde B (x) \big \|_F^2 \dx. \end{align*} After discretization, the regularization term can be \corr{expressed in} quadratic form \begin{equation*} \Jregdisc(\B) = \sum_{i,j=1}^2 \B_{ij}^H M \B_{ij} \end{equation*} where $\B_{ij} := (((\B)_1)_{ij},\dotsc,((\B)_\noDesign)_{ij})$. The particular form of the symmetric matrix $M\in \R^{\noDesign\times \noDesign}$ depends on $\kappa$ and on the chosen quadrature rule. Again for later use, the directional derivative with $\Y\in \SCK$ and $\B\in\SCK$ of the discretized regularization term reads for all $k\in\indexDesign$ \begin{equation*} \del{\Jregdisc(\B)}{(\B)_k}[(\Y)_k] = 2 \Re \left (\sum_{i,j=1}^2 ((\Y)_k)_{ij} \sum_{l=1}^\noDesign M_{kl} ((\B)_l)_{ij} \right). \end{equation*} It is well known that for the existence of a solution of \eqref{eqn:opt:prob} in infinite dimensions, \corr{an $L^2$-regularization is generally not,} sufficient. Nevertheless\corr{,} we see the desired regularization effect in the discretized setting, compare also \corr{with} \cref{sec:example}. We finally note that we do not apply a standard filter scheme as in \cite{Haber1996a,Sigmund1997a}, because the filtered material distribution $\tilde B$ is typically not contained in the admissible set $\GraphSet$. \section{Optimization Algorithm}\label{sec:algorithm} Throughout this section we would like to derive an optimization algorithm which takes the specific structure of problem \cref{eqn:opt:probTMdis} into \corr{consideration}. We proceed as follows: we first define suitable separable approximations of the non-separable functions $\nonsep(\B)$ defined in the previous section. On the basis of these we define a series of sub-problems and the so-called sequential global programming algorithm. \corr{We then} derive a global convergence result for the latter and show how we can efficiently solve non-convex separable sub-problems for two specific material parametrization schemes. \par We \corr{begin with some additional} notation: let $v:\SCK \to \R$ be continuously differentiable on a subset $\AdmSet\subset \SCK$. For all $i \in \indexDesign$ we define real and imaginary differential operators entry-wise by \begin{align*} (\nabla ^{i,R} v(\B))_{k,l} & := \del{v(\B)}{(\B)_i} [\boldsymbol e_k \boldsymbol e_l^T], & (\nabla ^{i,I} v(\B))_{k,l} & := \del{v(\B)}{(\B)_i} [\imath (\boldsymbol e_k \boldsymbol e_l^T)] \quad \forall 1 \le k,l\le 2, \end{align*} where $\boldsymbol e_l$ denotes the $l$-th standard basis vector. \par We use the notation $Y^R = \Re(Y)$ and $Y^I = \Im (Y)$ for any complex-valued tensor $Y$. \corr{This together with the differentiability of $\nonsep$ yields} the directional derivative in direction $\Y\in\SCK$ at $\B\in\SCK$ of the reduced functional \corr{$\nonsep$} \begin{align*} \del{\nonsep(\B)}{\B}[\Y] = \sum_{i=1}^K \langle \nabla ^{i,R} \nonsep(\B),(\Y)_i^R\rangle + \langle \nabla ^{i,I} \nonsep(\B),(\Y)_i^I\rangle. \end{align*} \subsection{Generalized Convex Approximation} We briefly recapitulate a \corr{number of} results from \cite{Stingl2009a}\corr{,} starting with the definition of a convex first-order approximation. \begin{definition}[convex first-order approximation]\label{defn:firstorder} We call an approximation $g:\SCK \to \R$ of a continuously differentiable function $\nonsep:\SCK \to \R$ a convex first-order approximation at $\bar \B =(\bar B_1,...,\bar B_\noDesign) \in \SCK$, if the following assumptions are satisfied \begin{enumerate}[label=\alph*),itemindent=0.5cm] \item $g(\bar \B) = \nonsep(\bar \B)$ \item $\del{g(\bar \B)}{B_i} = \del{\nonsep(\bar \B)}{B_i} $ for all $i\in \indexDesign$ \item $g$ is convex \end{enumerate} \end{definition} \begin{definition}[hyperbolic approximation]\label{defn:hyper} Let $\nonsep:\SCK\to \R$ be continuously differentiable on $\AdmSet\subset \SCK$ and $\bar {\B} = (\bar B_1,\ldots \bar B_\noDesign) \in \AdmSet$. Moreover\corr{,} let asymptotes $L=l\mathds1\in \SC$ and $U=u \mathds1\in \SC$ be given such that \begin{equation*} \Re(L)\prec \Re(Y) \prec \Re(U)\quad \text{and} \quad \Im(L) \prec \Im(Y) \prec \Im(U) \end{equation*} for all $Y \in \GraphSet$. Let $\tau$ \corr{be} a non-negative real parameter. Then we define the hyperbolic approximation $\hyperApprox$ of $\nonsep$ at $\bar {\B}$ as \begin{align}\label{eqn:hyper} \hyperApprox(\B) := & \nonsep(\bar {\B}) + \sum_{i=1}^\noDesign \partHyperApproxIR((\B)_i^R) -\constHyperApproxIR + \partHyperApproxII((\B)_i^I) -\constHyperApproxII \end{align} where the contributions of the real and imaginary part of $B$ are defined for $s\in\{R,I\}$ as \begin{align*} \partHyperApproxIs((\B)_i^s) & := \Big\langle (U^s - \bar B^s_i)\nabla ^{i,s}_+\nonsep(\bar {\B})(U^s - \bar B^s_i) + \tau (B^s_i-\bar B^s_i)^2, (U^s - B^s_i)^{-1}\Big\rangle \nonumber\\ & \qquad + \Big\langle (L^s - \bar B^s_i ) \nabla ^{i,s}_-\nonsep(\bar {\B})(L^s - \bar B^s_i) - \tau ((\B)^s_i-\bar B^s_i)^2, (L^s - (\B)^s_i)^{-1}\Big\rangle\nonumber \\ \constHyperApproxIs & := \Big\langle \nabla ^{i,s}_+\nonsep(\bar {\B}) , (U^s - \bar B^s_i)\Big\rangle + \Big\langle \nabla ^{i,s}_-\nonsep(\bar {\B}) , (L^s-\bar B^s_i)\Big\rangle\nonumber\corr{.} \end{align*} Here $\nabla^{i,s}_+\nonsep(\bar \B)$ and $\nabla ^{i,s}_-\nonsep(\bar \B)$ are the projections of $\nabla^{i,s}\nonsep(\bar \B)$ onto $\S_+$ and $\S_-$, \ie the space of symmetric positive definite and symmetric negative definite tensors, respectively. \end{definition} \begin{definition}[separable function on $\SCK$]\label{defn:seperable} A function $v:\SCK \to \R$ is called separable on $\SCK$ iff there exist $\bar \B\in \SCK$ and $\bar v_i : \SC \to \R$ for all $i \in\indexDesign$ such that \begin{equation*} v(\B) = v(\bar {\B}) + \sum_{i=1}^\noDesign \bar v_i((\B)_i) \quad \forall \B\in \SCK. \end{equation*} \end{definition} \begin{theorem} The hyperbolic approximation $\hyperApprox$ of $\nonsep$ given in \cref{defn:hyper} is a convex first-order approximation according to \cref{defn:firstorder} and separable on $\SCK$ according to \cref{defn:seperable}. \end{theorem}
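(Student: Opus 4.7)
The plan is to verify the three conditions of \cref{defn:firstorder} and the separability condition of \cref{defn:seperable} by treating the real and imaginary blocks $s\in\{R,I\}$ in parallel, since their contributions to \cref{eqn:hyper} have identical structure. Separability is immediate from the form of \cref{eqn:hyper}: setting $\bar v_i((\B)_i) := \partHyperApproxIR((\B)_i^R) + \partHyperApproxII((\B)_i^I) - \constHyperApproxIR - \constHyperApproxII$ exhibits $\hyperApprox$ as $\nonsep(\bar\B) + \sum_i \bar v_i((\B)_i)$, so I would simply note this and proceed.

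For the interpolation property (a), I would evaluate $\hyperApprox$ at $\B=\bar\B$. The $\tau$-contributions vanish because $((\B)^s_i - \bar B^s_i)^2 = 0$ there. Since $U = u\mathds{1}$ and $L = l\mathds{1}$ are scalar multiples of the identity, $U^s$ and $L^s$ commute with every symmetric tensor; then cyclicity of the trace together with the cancellation $(U^s - \bar B^s_i)^{-1}(U^s - \bar B^s_i) = \mathds{1}$ collapses the bracket to $\langle \nabla^{i,s}_+ \nonsep(\bar\B),\,U^s-\bar B^s_i\rangle + \langle \nabla^{i,s}_- \nonsep(\bar\B),\,L^s-\bar B^s_i\rangle$, which is exactly $\constHyperApproxIs$. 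Hence each bracketed summand vanishes and $\hyperApprox(\bar\B)=\nonsep(\bar\B)$.

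For the first-order match (b), I would differentiate the $i$-th summand with respect to $(\B)^s_i$ in an arbitrary real-symmetric direction $Y$, using the identity $\tfrac{\partial}{\partial B}(U-B)^{\mo}[Y] = (U-B)^{\mo}\,Y\,(U-B)^{\mo}$. At $\B=\bar\B$ the $\tau$-block has a vanishing numerator \emph{and} a vanishing first derivative (being quadratic in $(\B)^s_i-\bar B^s_i$), so only the constant-numerator term contributes. Cyclicity of the trace together with the cancellation of $(U^s-\bar B^s_i)$ against its inverse reduces the $U$-term to $\langle \nabla^{i,s}_+\nonsep(\bar\B),Y\rangle$, and analogously the $L$-term to $\langle \nabla^{i,s}_-\nonsep(\bar\B),Y\rangle$. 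Since the positive and negative semidefinite projections of a symmetric tensor sum to the tensor itself, the two contributions combine to $\langle \nabla^{i,s}\nonsep(\bar\B),Y\rangle$, matching the directional derivative $\del{\nonsep(\bar\B)}{(\B)_i}$ in both real and imaginary directions.

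Convexity (c) will be the main obstacle. Each $\partHyperApproxIs$ splits into two contributions of the form $(\B)^s_i\mapsto\langle M+\tau((\B)^s_i-\bar B^s_i)^2,\,(C-(\B)^s_i)^{\mo}\rangle$, where $M\succeq 0$ in the $U$-branch and $M\preceq 0$ in the $L$-branch, and where $U^s - (\B)^s_i \succ 0$ and $L^s - (\B)^s_i \prec 0$ on the admissible set. The signs are arranged so that, in each branch, the constant-numerator piece is a spectral composition of the convex scalar function $t\mapsto 1/(u-t)$ with an $M$-weighted quadratic, while the $\tau$-piece is of quadratic-over-linear type with a matching sign. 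Both building blocks are known to be convex in this sign regime; I would import the corresponding lemmas from \cite{Stingl2009a}, apply them blockwise for $i\in\indexDesign$ and $s\in\{R,I\}$, and then sum over $i$ and $s$ to conclude convexity of $\hyperApprox$ on $\SCK$.
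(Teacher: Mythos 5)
Your proposal is correct and follows exactly the route the paper intends: the paper's proof consists of a single sentence deferring to \cite[Theorem~3.4]{Stingl2009a} and calling the complex-valued case a ``straightforward extension,'' and your argument is precisely that extension carried out --- treating the real and imaginary blocks in parallel, verifying interpolation and first-order matching by cyclicity of the trace and the vanishing of the quadratic $\tau$-terms at $\bar\B$, and importing the convexity of the two sign-matched building blocks from the same reference. In short, you have supplied the details the paper omits, with no gap.
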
 \begin{proof} The theorem is a straightforward extension of \cite[Theorem 3.4]{Stingl2009a} to the case of complex-valued material tensors. \end{proof} \begin{remark}[Proximal point terms]\label{rem:proxpoint} The terms $\tau \langle (B^s_i-\bar B^s_i)^2, (U^s - B^s_i)^{-1}\rangle$ and $\tau \langle ((\B)^s_i-\bar B^s_i)^2, (L^s - (\B)^s_i)^{-1}\rangle$ in \cref{defn:hyper} act as proximal point terms; accordingly the parameter $\tau$ takes the role of a proximal point parameter. As shown for the real-valued case in \cite{Stingl2009a}\corr{,} for $\tau > 0$ the hyperbolic approximations introduced above become uniformly convex with a modulus of the type $c_0+\tau c_1$, where $c_0\in \R_{\geq 0}$ and $c_1\in \R_{>0}$ are appropriate constants. \end{remark}\par To establish a solution scheme for \eqref{eqn:opt:probTMdisred}, we define the model problem \begin{equation}\label{eqn:Pjp} \begin{aligned} \min_{\B\in\AdmSet} ~ &\totalHyperApprox(\B) \end{aligned} \tag{\ensuremath{P^{\tau,\gamma}_{h,\bar\B}}} \end{equation} with the objective functional $\totalHyperApprox(\B) := \hyperApprox(\B)+\gamma \Jgraydisc(\B)$, i.e. we have applied the hyperbolic approximation \eqref{defn:hyper} to the non-separable functional $\nonsep$. \par We are now in the position to state the so-called sequential global programming algorithm, see \cref{alg:SGP}. We note that in each major iteration\corr{,} a finite number of sub-problems of type \cref{eqn:Pjp} \corr{are} solved. The inner loop realizes a globalization strategy: whenever the solution of the sub-problem does not provide sufficient descent for the objective of the original problem, the proximal point parameter $\tau$ is increased. Of course, in practice the stopping criterion of the outer loop is relaxed by a small positive constant. \begin{algorithm}\caption{Sequential Global Programming}\label{alg:SGP} \begin{algorithmic} \State Choose $\B^1\in \AdmSet$, $j \gets 1$, $\theta >1$, $\delta>0$, $\gamma\ge 0$ \Loop \State Choose $\tau$ \State $\bar \B \gets \B^j$ \Loop \State Solve \eqref{eqn:Pjp} $\rightarrow \B^+$ to global optimality \If{$\Jtotal(\B^+) < \totalHyperApprox(\B^j) - \delta \| \B^+ -\B^j\|_{F^\noDesign}^2$} \State $\B^{j+1} \gets \B^+$, {\bf break inner loop} \EndIf \State $\tau \gets \theta \tau$ \EndLoop \If {$\|\B^{j+1} -\B^{j}\|_{F^\noDesign}^2 = 0$} \State $\B^* \gets \B^{j+1}$, {\bf break outer loop} \EndIf \State $j \gets j+1$ \EndLoop \end{algorithmic} \end{algorithm}  \subsection{Convergence theory} In order to be able to prove a global convergence result for \cref{alg:SGP}\corr{,} we need a few technical definitions \corr{and} assumptions. \corr{This is because standard optimality conditions do not apply due to the potential non-smoothness of the grayness term and the structure of the feasible set in \cref{eqn:opt:probTMdis}}. \begin{definition}[tangential cone]\label{def:conv:tangentialcone} Let $l\in\indexEdges$ and $Y\in \GraphSet$, $\B\in \AdmSet$, then we define the following tangential cones: \begin{align*} \TangCone^{l}_Y & := \Big\{ D\in \SC \mid \exists (E^n)_{n\in\N} \subset \EdgeImage_l, (t^n)_{n\in\N} \subset \R_{>0},E^n \rightarrow Y, t^n\searrow 0 :\frac{E^{n}-Y}{t^n} \rightarrow D \Big\},\\ \TangCone^{l}_\B & := \Big\{ \D\in \SCK \mid (\D)_k \in \TangCone^l_{(\B)_k}, \forall k\in \indexDesign\Big\}. \end{align*} \end{definition} \begin{ass}[Assumptions on $\Jtotal$ and $\totalHyperApprox$]~\label{ass:conv} The functions $\Jtotal : \AdmSet \to \R$ and $\totalHyperApprox : \AdmSet \to \R$ are twice continuously \corr{directionally} differentiable at any $\B \in \AdmSet$ in any direction of the tangential cone $\TangCone^l_{\B}$ and for all $\bar \B \in \AdmSet$. \end{ass} We note that it is a simple exercise to show that the assumption on $\totalHyperApprox$ is satisfied for the grayness term and parametrizations discussed later in this section. Moreover\corr{,} the smoothness assumption on $\Jtotal$ is satisfied if the physical objective functional $\Jphysdisc$ is twice continuously differentiable. \begin{remark}[Inequalities for the objective functional and its approximation] Due to \cref{ass:conv} and the compactness of the set of admissible tensors $\GraphSet$ as well as the properties of the proximal point term discussed in \cref{rem:proxpoint}\corr{,} there exist $c_0\in \R_{\geq 0}$ and $c_1,c_2,c_3,c_4 \in \R_{>0}$ s.t. the following holds: \begin{align} \label{eqn:modulus} \secdel{\totalHyperApprox}{(\B)_k}[Y,Z] &\geq (c_0 + c_1 \tau)\|Y\|_F\|Z\|_F & &\forall \B \in \AdmSet, \forall Y,Z \in \TangCone_{(\B)_k}^l\\ \Big| \del{\Jtotal(\B)}{(\B)_k}[Y] \Big| & \leq c_2\|Y\|_F & &\forall \B \in \AdmSet, \forall Y \in \TangCone_{(\B)_k}^l \label{eqn:d_obj_func_lipschitz}\\ \label{eqn:dd_obj_func_lipschitz} \Big| \secdel{\Jtotal(\B)}{(\B)_k}[Y,Z] \Big| & \leq c_3\|Y\|_F\|Z\|_F & &\forall \B \in \AdmSet, \forall Y,Z \in \TangCone_{(\B)_k}^l \\ |\totalHyperApprox( \B) - \Jtotal( \B)| & \leq (c_4 + c_1\tau)\|\bar \B - \B\|_{F^\noDesign} ^2 & & \forall \B \in \AdmSet \label{eqn:approx_ineq}\\ \label{eqn:FOA_prop} \del{\totalHyperApprox(\B)}{(\B)_k}[Y] & = \del{\Jtotal(\B)}{(\B)_k}[Y] & & \forall Y \in \TangCone_{(\B)_k}^l \end{align} for all $(l,k,\tau) \in \indexEdges\times\indexDesign\times\R_{\geq 0}$ and $ \bar \B \in \AdmSet$\corr{.} \end{remark} Thus we can prove the following: \begin{lemma}[finite number of inner iterations] \label{lem:fnii} The inner loop terminates after a finite number of iterations. \end{lemma}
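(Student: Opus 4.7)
The plan is to show that for $\tau$ sufficiently large, the acceptance test $\Jtotal(\B^+) < \totalHyperApprox(\B^j) - \delta\|\B^+ - \B^j\|_{F^\noDesign}^2$ must be triggered; since $\tau$ is multiplied by $\theta > 1$ in every inner iteration, $\tau$ exceeds any fixed threshold after a bounded number of steps. Throughout, let $\B^+$ denote the global optimum of~\eqref{eqn:Pjp} for the current $\tau$, set $\bar\B = \B^j$, and recall that zeroth-order consistency of the hyperbolic approximation yields $\totalHyperApprox(\B^j) = \Jtotal(\B^j)$.

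The first step exploits uniform convexity: the strong-convexity estimate~\eqref{eqn:modulus} combined with a second-order Taylor expansion of $\totalHyperApprox$ at $\B^j$ and the global optimality $\totalHyperApprox(\B^+) \leq \totalHyperApprox(\B^j)$ gives
\[
\del{\totalHyperApprox(\B^j)}{\B}[\B^+ - \B^j] \leq -\tfrac{c_0 + c_1\tau}{2}\|\B^+ - \B^j\|_{F^\noDesign}^2,
\]
which, by the first-order consistency~\eqref{eqn:FOA_prop}, continues to hold with $\Jtotal$ in place of $\totalHyperApprox$. The second step uses the Lipschitz-type Hessian bound~\eqref{eqn:dd_obj_func_lipschitz} in a second-order Taylor expansion of $\Jtotal$ at $\B^j$,
\[
\Jtotal(\B^+) \leq \Jtotal(\B^j) + \del{\Jtotal(\B^j)}{\B}[\B^+-\B^j] + \tfrac{c_3}{2}\|\B^+ - \B^j\|_{F^\noDesign}^2.
\]
Inserting the previous estimate and replacing $\Jtotal(\B^j)$ by $\totalHyperApprox(\B^j)$ yields
\[
\Jtotal(\B^+) \leq \totalHyperApprox(\B^j) - \tfrac{c_0 + c_1\tau - c_3}{2}\|\B^+ - \B^j\|_{F^\noDesign}^2,
\]
so the acceptance criterion is certainly satisfied as soon as $\tau > (c_3 - c_0 + 2\delta)/c_1$. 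Since the inner loop sends $\tau \mapsto \theta\tau$, this threshold is crossed after at most a fixed, finite number of iterations.

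The principal technical obstacle is that $\AdmSet = \GraphSet^\noDesign$ is non-convex: $\GraphSet$ is the union of the curved one-dimensional images $\EdgeImage_l$, so the straight segment from $\B^j$ to $\B^+$ generally leaves $\AdmSet$, whereas the bounds~\eqref{eqn:modulus}--\eqref{eqn:FOA_prop} are formulated only for directions in the tangential cone $\TangCone_{\B}^l$. The two Taylor arguments above must therefore be rephrased componentwise along a piecewise-smooth feasible path in $\AdmSet$ constructed from the edge parametrizations $\Param_l$, with the second-order remainders controlled by integrating the Hessian bounds along that path rather than along a secant. A minor subtlety is the degenerate case $\B^+ = \B^j$, for which no strict descent is possible; this case must be excluded by an auxiliary observation, namely that $\B^+ = \B^j$ being the unique minimizer of the strictly convex sub-problem for some $\tau$ already identifies $\B^j$ as a stationary point of the outer iteration and can be detected by the stopping test $\|\B^{j+1} - \B^{j}\|_{F^\noDesign}^2 = 0$.
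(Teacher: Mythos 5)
Your argument is correct and follows essentially the same route as the paper's: both compare the lower bound $c_0+c_1\tau$ on the second derivative of the model with the upper bound $c_3$ on that of \Jtotal, conclude that once $\tau$ exceeds a threshold of order $(c_3-c_0)/c_1$ (shifted to accommodate $\delta$) the acceptance test must fire, and then use the geometric update $\tau\mapsto\theta\tau$ to bound the number of inner iterations. The paper packages this as a one-line majorization statement $\totalHyperApprox(\B)\ge\Jtotal(\B)$ for all $\tau>\tau_{\max}$ followed by the chain $\Jtotal(\B^+)\le\totalHyperApprox(\B^+)\le\totalHyperApprox(\B^j)$, and it silently passes over the two caveats you correctly flag --- that the Taylor/second-derivative estimates are only stated for tangential-cone directions on the non-convex set \AdmSet, and that the strict acceptance inequality degenerates when $\B^+=\B^j$ --- so your version is, if anything, the more careful one.
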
 \begin{proof} There exists $\tau_{\max}\in \R$ s.t. \begin{align*} \totalHyperApprox(\B) \geq \Jtotal(\B) \qquad \forall \B, \bar \B \in \AdmSet,\tau > \tau_{\max} \end{align*} and thus for $\hat \tau_{\max} := \tau_{\max} + \delta$ the stopping criterion of the inner loop of \cref{alg:SGP} is fulfilled for all $\tau > \hat \tau_{\max}$. This directly results from properties a) and b) in \cref{defn:firstorder} which hold for $\totalHyperApprox(\B)$, boundedness of the second derivative of $\Jtotal$ on the compact set $\AdmSet$ and the lower bound on the second derivative of $\totalHyperApprox$ depending on $\tau$ obtained from \cref{eqn:modulus}, \ie $\tau_{\max} = (c_3-c_0)(c_1)^{-1}$. \end{proof} \begin{theorem}[convergence results] Let $(\tau^m,\B^m)_{m\in\N}$ be a sequence generated by \cref{alg:SGP} and let \cref{ass:conv} be satisfied. Then \corr{there} exists $\Jtotal^* \in \R$ and $\B^* \in \AdmSet$ such that the following holds: \begin{enumerate} \item[a)] convergence of function values: $\quad \Jtotal(\B^m) \rightarrow \Jtotal^*$ \item[b)] convergence of material tensors: \begin{itemize} \item For $\delta > 0$: $\quad \B^m \rightarrow \B^* $ \item For $\delta = 0$: $\quad \B^{m_n} \rightarrow \B^* \quad$ for subsequence $(m_n)_{n\in\N}$\corr{.} \end{itemize} \end{enumerate} \end{theorem}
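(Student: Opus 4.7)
The plan is to first derive a sufficient-decrease inequality for $\Jtotal$ along the iterates, use it to obtain part a) together with a key summability estimate, and then exploit compactness of the admissible set for part b). The main steps proceed in the natural order dictated by the algorithm's structure: monotonicity first, accumulation points second, uniqueness of the limit third.

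First I would combine the interpolation property of the hyperbolic approximation (point a) of \cref{defn:firstorder}, extended to the full objective via $\totalHyperApproxArg{\tau}{\B^j}(\B^j) = \Jtotal(\B^j)$ since the grayness term is unchanged) with the inner-loop descent criterion to derive
\begin{equation*}
\Jtotal(\B^{j+1}) < \Jtotal(\B^j) - \delta\|\B^{j+1}-\B^j\|_{F^\noDesign}^2 .
\end{equation*}
The admissible set $\AdmSet = \GraphSet^\noDesign$ is compact, because each $\EdgeImage_l$ is the continuous image of $[0,1]$ under $\Param_l$, and on this compact set $\Jtotal$ is bounded below by \cref{ass:conv}. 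Hence $(\Jtotal(\B^j))_{j\in\N}$ is strictly decreasing and bounded, so it converges to some $\Jtotal^* \in \R$, which is part a). Note that \cref{lem:fnii} guarantees the outer iteration index indeed advances after finitely many inner iterations, so the sequence is well defined.

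For part b) in the regime $\delta=0$, nothing beyond compactness of $\AdmSet$ is needed: extract any convergent subsequence $\B^{m_n}\to\B^*$.

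For part b) in the regime $\delta > 0$, telescoping the above inequality yields
\begin{equation*}
\delta \sum_{j=1}^{\infty} \|\B^{j+1}-\B^j\|_{F^\noDesign}^2 \;\leq\; \Jtotal(\B^1) - \Jtotal^* \;<\; \infty,
\end{equation*}
so in particular $\|\B^{j+1}-\B^j\|_{F^\noDesign}\to 0$. I expect this last step, promoting this asymptotic regularity plus subsequential convergence to convergence of the entire sequence, to be the main obstacle. My plan is to extract $\B^{m_n}\to\B^*$ by compactness, observe that the shifted subsequence $\B^{m_n+1}$ has the same limit $\B^*$ because $\|\B^{m_n+1}-\B^{m_n}\|_{F^\noDesign}\to 0$, and then use that $\B^{m_n+1}$ is the \emph{unique} global minimizer of $\totalHyperApproxArg{\tau^{m_n}}{\B^{m_n}}$ (uniqueness follows from the uniform convexity modulus $c_0+c_1\tau^{m_n}>0$ recalled in \cref{rem:proxpoint}, combined with the bound $\tau^{m_n}\leq\hat\tau_{\max}$ from \cref{lem:fnii}). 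Passing to a further subsequence so that $\tau^{m_n}\to\tau^*$ and invoking continuous dependence of the strongly convex argmin on the center $\bar\B$ and on $\tau$, I obtain that any limit point $\B^*$ must solve the fixed-point condition $\B^* = \argmin_{\AdmSet}\totalHyperApproxArg{\tau^*}{\B^*}$; together with the strict descent established above (which forces the value $\Jtotal^*$ to be attained uniquely along the iterates up to the asymptotic regularity), this eliminates the possibility of distinct accumulation points and yields $\B^m\to\B^*$.
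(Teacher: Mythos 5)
Your derivation of the descent inequality, of part a), and of the $\delta=0$ case of part b) coincides with the paper's proof: the paper likewise uses $\totalHyperApproxArg{\tau}{\B^m}(\B^m)=\Jtotal(\B^m)$ together with the inner stopping criterion to obtain $\Jtotal(\B^{m+1})\leq\Jtotal(\B^m)-\delta\|\B^{m+1}-\B^m\|^2_{F^\noDesign}$, deduces convergence of the function values from monotonicity and boundedness of $\AdmSet$, and extracts a convergent subsequence by compactness. Up to and including the conclusion $\|\B^{m+1}-\B^m\|_{F^\noDesign}\to 0$ you are on the same track (the paper gets this directly from $|\Jtotal(\B^m)-\Jtotal(\B^{m+1})|\to 0$ rather than by telescoping, which is immaterial).

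The gap is in your final step for $\delta>0$. You appeal to \emph{uniqueness} of the global minimizer of $\totalHyperApproxArg{\tau^{m_n}}{\B^{m_n}}$ over $\AdmSet$, justified by the uniform convexity modulus $c_0+c_1\tau$. But $\AdmSet=\GraphSet^{\noDesign}$ is a product of unions of curves $\EdgeImage_l$ in $\SC$ and is therefore not convex; a uniformly convex function minimized over a non-convex compact set can have many global minimizers (minimize $\|\cdot\|_F^2$ over a sphere), and the grayness term $\gamma\Jgraydisc$ is itself non-convex. Hence neither the uniqueness nor the continuous dependence of the argmin that you invoke is available, and the concluding claim --- that the fixed-point condition together with strict descent ``eliminates the possibility of distinct accumulation points'' --- does not follow: all accumulation points share the value $\Jtotal^*$, which is perfectly compatible with there being a continuum of them. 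You have, however, correctly located the delicate point. The paper itself passes from asymptotic regularity plus subsequential convergence directly to convergence of the whole sequence, an implication that is false for general sequences (a sequence drifting slowly around a circle with $\sum_m\|\B^{m+1}-\B^m\|^2<\infty$ can have every point of the circle as an accumulation point). So your instinct that an additional argument is needed here is sound, but the particular argument you propose does not close the gap, and in this respect your proof is no more complete than the one given in the paper.
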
 \begin{proof}~ \begin{enumerate} \item[a)] Due to the monotonicity of $\Jtotal(\B^m)$, the continuity of $\Jtotal$ and the boundedness of the admissible set $\AdmSet$, we \corr{clearly have} convergence of the function values. \item[b)] The boundedness of $\AdmSet$ leads directly to the existence of a convergent subsequence. If $\delta > 0$\corr{,} then the stopping criterion of the inner loop of \cref{alg:SGP} leads to $\Jtotal(\B^{m+1}) \leq \Jtotal(\B^{m}) - \delta\|\B^m - \B^{m+1}\|^2_{F^\noDesign}$ and implies $\big|\Jtotal(\B^{m})-\Jtotal(\B^{m+1})\big| \geq \delta\|\B^m - \B^{m+1}\|^2_{F^\noDesign}$. Together with the convergence of the objective functional values, \ie $|\Jtotal(\B^m) - \Jtotal(\B^{m+1})| \rightarrow 0$\corr{,} we get \begin{align*} \|\B^m - \B^{m+1}\|^2_{F^\noDesign} \rightarrow 0. \end{align*} Together with the convergence along the subsequence to $\B^*$ we \corr{obtain} convergence of the whole sequence $(\B^m)_{m\in\N}$. \end{enumerate} \end{proof} Now, first order optimality conditions based on the tangential cone defined in \cref{def:conv:tangentialcone} can be written as: \begin{definition}[first order optimality]\label{def:firstorder} A material distribution $\B\in\AdmSet$ is called first order optimal to $\Jtotal$ iff \begin{align*} \del{\Jtotal(\B)}{(\B)_k}[(\Y)_k] \geq 0 \qquad \forall (l,k)\in \indexEdges \times \indexDesign, \Y \in \TangCone^l_\B\corr{.} \end{align*} \end{definition} \begin{theorem}[first order optimality of $\B^*$] Any accumulation point $\B^*$ of the sequence $(\B^k)_{k\in\mathds N}$ generated by \cref{alg:SGP} is first order optimal to $\Jtotal$. \end{theorem}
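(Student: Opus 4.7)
The plan is to combine the global optimality of each iterate $\B^{m+1}$ within its sub-problem \eqref{eqn:Pjp} centered at $\bar\B=\B^m$ with the first-order-agreement identity \cref{eqn:FOA_prop} and the model/objective approximation bound \cref{eqn:approx_ineq}, and then to pass to the limit along a subsequence. Given an accumulation point $\B^*$, extract a subsequence $\B^{m_n}\to \B^*$; by \cref{lem:fnii} the proximal parameters $\tau^{m_n}$ are uniformly bounded, so after further thinning we may assume $\tau^{m_n}\to \tau^*$ for some $\tau^*\in[0,\hat\tau_{\max}]$. For $\delta>0$ the sufficient-decrease criterion in \cref{alg:SGP} together with convergence of the function values $\Jtotal(\B^m)$ forces $\|\B^{m+1}-\B^m\|_{F^\noDesign}^2\to 0$, so $\B^{m_n+1}\to \B^*$ as well; for $\delta=0$ one passes to a further subsequence where $\B^{m_n+1}$ also converges and argues separately that its limit must agree with $\B^*$.

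Fix now $l\in\indexEdges$, $k\in\indexDesign$ and a tangent direction $Y_k\in\TangCone^l_{(\B^*)_k}$; by \cref{def:conv:tangentialcone} there exist $E^n\in\EdgeImage_l$ with $E^n\to (\B^*)_k$ and $t^n\searrow 0$ such that $(E^n-(\B^*)_k)/t^n\to Y_k$. I would use the $C^2$-smooth parametrization $\psi_l$ from \cref{ass:properties} to transport this construction to the iterate $\B^{m}$: for all sufficiently large $m=m_n$ such that $(\B^{m})_k$ lies on an edge adjacent to $(\B^*)_k$, build a competitor $\tilde\B^{m,t}\in\AdmSet$ differing from $\B^m$ only in its $k$-th slot, lying on that edge, and satisfying $(\tilde\B^{m,t})_k-(\B^m)_k = t\,Y_k + o(t) + o(1)_{m\to\infty}$.

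With this competitor, global optimality of $\B^{m+1}$ gives $\totalHyperApproxArg{\tau^m}{\B^m}(\B^{m+1})\le \totalHyperApproxArg{\tau^m}{\B^m}(\tilde\B^{m,t})$, and \cref{eqn:approx_ineq} yields $\Jtotal(\B^{m+1})\le \totalHyperApproxArg{\tau^m}{\B^m}(\B^{m+1}) + (c_4+c_1\tau^m)\|\B^{m+1}-\B^m\|^2_{F^\noDesign}$. Taylor expansion of $\totalHyperApproxArg{\tau^m}{\B^m}$ at $\bar\B=\B^m$, combined with the first-order agreement \cref{eqn:FOA_prop} and the uniform second-derivative bound from \cref{eqn:modulus,eqn:dd_obj_func_lipschitz}, gives $\totalHyperApproxArg{\tau^m}{\B^m}(\tilde\B^{m,t}) = \Jtotal(\B^m) + t\,\del{\Jtotal(\B^m)}{(\B^m)_k}[Y_k] + O(t^2) + o(t)$ with constants uniform in $m$. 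Combining the pieces and taking $m=m_n\to\infty$, using convergence of function values, vanishing of $\|\B^{m+1}-\B^m\|^2$, and continuity of the directional derivative, I obtain $0 \le t\,\del{\Jtotal(\B^*)}{(\B^*)_k}[Y_k] + O(t^2)$; dividing by $t>0$ and letting $t\searrow 0$ yields the first-order condition from \cref{def:firstorder}. Extending the argument over all $(l,k)$ and all $Y_k\in\TangCone^l_{(\B^*)_k}$ completes the proof.

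The main obstacle is the faithful construction of $\tilde\B^{m,t}$ along the graph-structured set $\AdmSet$, particularly when $(\B^*)_k$ is a vertex of $\GraphSet$: several edges meet at such a vertex, so one must pick the edge on which $(\B^{m_n})_k$ eventually lies and then transport the tangent direction $Y_k$ using $C^2$-smoothness of the corresponding $\psi_l$ while preserving its first-order contribution. A secondary subtlety is the $\delta=0$ case, where $\|\B^{m+1}-\B^m\|_{F^\noDesign}^2$ does not automatically vanish, requiring an additional subsequence argument to control the quadratic error term in \cref{eqn:approx_ineq} before passing to the limit.
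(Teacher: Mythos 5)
Your high-level skeleton --- global optimality of $\B^{m+1}$ in its subproblem, first-order agreement \eqref{eqn:FOA_prop}, the quadratic error bound \eqref{eqn:approx_ineq}, and a limit passage along the subsequence --- is the same machinery the paper uses, but the paper runs it as a proof by contradiction and, more importantly, chooses its competitor differently in a way that removes precisely the obstacle you identify and leave open. The gap is the construction of $\tilde\B^{m,t}$: you need a feasible point of $\AdmSet$ whose $k$-th block differs from $(\B^m)_k$ by $t\,Y_k+o(t)+o(1)$, but $Y_k$ is a tangent vector to $\EdgeImage_l$ at $(\B^*)_k$, not at $(\B^m)_k$. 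On a curved edge (e.g.\ the rotational parametrization) no point of $\GraphSet$ has that form; if you instead move along the edge starting from $(\B^m)_k$, the direction you actually realize is the tangent at $(\B^m)_k$, and recovering an expansion in terms of $Y_k$ itself requires a continuity argument for the tangent field of $\psi_l$ and for the derivative of the model that you have not supplied. Worse, when $(\B^*)_k\in\NodeImage$ the iterates $(\B^{m_n})_k$ may approach it along an edge different from $l$, in which case there is no natural transport at all. Since you explicitly flag this as ``the main obstacle'' without resolving it, the proof is incomplete as it stands.

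The paper avoids this entirely: arguing by contradiction, it assumes a direction $(\Y)_k\in\TangCone^l_{(\B^*)_k}$ with $\del{\Jtotal(\B^*)}{(\B^*)_k}[(\Y)_k]=-\nu<0$, takes the sequences $E^n\in\EdgeImage_l$ and $t^n\searrow0$ furnished by \cref{def:conv:tangentialcone}, and uses as competitor the tuple $\E^n$ obtained by replacing only the $k$-th block of $\B^m$ by the \emph{fixed feasible point} $E^n$. Feasibility is automatic (no transport needed), the chord derivative $\del{\Jtotal(\B^m)}{(\B^m)_k}[E^n-(\B^m)_k]$ is related to $\del{\Jtotal(\B^*)}{(\B^*)_k}[E^n-(\B^*)_k]$ via the bounds \eqref{eqn:d_obj_func_lipschitz}--\eqref{eqn:dd_obj_func_lipschitz}, and the quadratic model error is absorbed by taking $n$ large so that $\|E^n-(\B^*)_k\|_F^2\le t^n\nu/(8(c_4+c_1\overline{\tau}))$. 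The resulting inequality $\Jtotal(\B^{m+1})<\Jtotal(\B^*)-\tfrac{1}{16}t^n\nu$ contradicts the monotonicity of the function values --- which, incidentally, also disposes of the $\delta=0$ case without needing $\|\B^{m+1}-\B^m\|_{F^\noDesign}\to0$, a point your direct argument still has to handle separately. If you wish to salvage your direct version, replace $\tilde\B^{m,t}$ by these fixed points $E^n$ and work with chord directions rather than transported tangents.
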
 \begin{proof} We will argue by contradiction \corr{and assume} $\B^*$ is not first order optimal. Then there exists an element index $k\in \indexDesign$, edge index $l\in \indexEdges$ and $(\Y)_k \in \TangCone^l_{(\B^*)_k}$ such that \begin{align} \del{\Jtotal(\B^*)}{(\B^*)_k}[(\Y)_k] = -\FOOptConst < 0. \label{eqn:LocDer} \end{align} Thus there exist sequences $(E^n)_{n\in\mathds N}$ and $(t^n)_{n\in\mathds N}$ as in \cref{def:conv:tangentialcone} of the tangential cone $\TangCone^l_{(\B^*)_k}$ with $E^n \rightarrow (\B^*)_k$ and $N_1\in \mathds N$ large enough that the following holds: \begin{align*} \del{\Jtotal(\B^*)}{(\B^*)_k}[E^n - (\B^*)_k] - \del{\Jtotal(\B^*)}{(\B^*)_k}[(\Y)_k] t^n & < \tfrac12 t^n \FOOptConst & \forall n > N_1\corr{.} \end{align*} Together with \eqref{eqn:LocDer}\corr{, we obtain}: \begin{align} \del{\Jtotal(\B^*)}{(\B^*)_k}[E^n - (\B^*)_k] &< -\tfrac12 t^n \FOOptConst & \forall n> N_1 \label{eq:approx_prop_6}. \end{align} Note that the existence of $N_1$ fulfilling this inequality is given by the properties of the tangential cone. To continue the proof, we derive an estimate for the directional derivative in \corr{the} direction of $E^n$ for a small change in the expansion point $\B^m$, \ie \begin{align*} &\Big|\del{\Jtotal(\B^m)}{(\B^m)_k} [E^n - (\B^m)_k] - \del{\Jtotal(\B^*)}{(\B^*)_k} [E^n - (\B^*)_k]\Big|\\ & \quad = \Big|\del{\Jtotal(\B^m)}{(\B^m)_k} [E^n - (\B^m)_k + (\B^*)_k - (\B^*)_k] \\ & \qquad - \del{ \Jtotal(\B^*)}{(\B^*)_k} [E^n - (\B^m)_k + (\B^*)_k - (\B^*)_k] - \del{\Jtotal(\B^*)}{(\B^*)_k} [(\B^m)_k-(\B^*)_k]\Big|\\ & \quad \leq c_2\|\B^* - \B^m\|_{F^\noDesign} + c_3\|\B^*-\B^m\|_{F^\noDesign} \Big(\|\E^n - \B^*\|_{F^\noDesign} + \|\B^*-\B^m\|_{F^\noDesign} \Big) \end{align*} where we have used \cref{eqn:d_obj_func_lipschitz} and \cref{eqn:dd_obj_func_lipschitz}. As $\B^m \rightarrow \B^*$ and $n\in \N$ is fixed, \corr{based on the latter estimate we can choose} $M_1(t^n)\in\N$ sufficiently large s.t. \begin{align} \Big|\del{\Jtotal(\B^m)}{(\B^m)_k} [E^n - (\B^m)_k] - \del{\Jtotal(\B^*)}{(\B^*)_k} [E^n - (\B^*)_k]\Big| < \tfrac14t^n\FOOptConst\quad \forall m> M_1(t^n) \label{eq:approx_prop_10}. \end{align} If $\B^m$ converges only along a sub-sequence, we redefine $(\B^m)_{m\in\N}$ by a sub-sequence thereof, \ie $\B^m := \B^{m_n}$ and continue with the same arguments. Thus by \eqref{eq:approx_prop_6} and \eqref{eq:approx_prop_10}, for a slight change of the expansion point the directional derivative \corr{remains} negative, \ie \begin{align} \del{\Jtotal(\B^m)}{(\B^m)_k}[E^n - (\B^m)_k] &< -\tfrac14t^n\FOOptConst & \forall m > M_1(t^n), \forall n>N_1\corr{.} \label{eqn:LocDer1} \end{align} Moreover, by \eqref{eqn:FOA_prop} and \eqref{eqn:approx_ineq} the following holds for the first order accurate model $\totalHyperApproxArg{\tau}{\B^m}$ with $\E^n:= (B^m_1,\dotsc,B^m_{k-1},E^n,B^m_{k+1},\dotsc,B^m_K)$ for all $(m,n,\tau) \in \N\times \indexDesign \times \R_{>0}$: \begin{align} \totalHyperApproxArg{\tau}{\B^m}(\E^n) & \leq \Jtotal(\B^m) + \del{\Jtotal(\B^m)}{(\B^m)_k}[E^n - (\B^m)_k] + (c_4 + c_1\tau)\|E^n - (\B^m)_k\|_F^2. \label{eqn:mod_prop} \end{align} As $E^n$ \corr{converges} to $(\B^*)_k$, there exists $N_2 > N_1$ such that \begin{align*} \|E^n - (\B^*)_k\|^2_F &\leq \frac{t^n \FOOptConst}{8(c_4+c_1\overline{\tau})} & \forall n> \corr{N_2} \end{align*} with $\overline{\tau}$ being the maximal possible $\tau$ of \cref{alg:SGP}. This maximum exists as shown in \cref{lem:fnii} and is bounded by the maximum of $\theta\tau_{\max{}}$ and the initial value for $\tau$, where $\theta > 1$ is the scaling parameter within the inner loop of \cref{alg:SGP}. \par By inequality \eqref{eqn:LocDer1} and the latter estimate we \corr{have} from \cref{eqn:mod_prop} \begin{align} \label{eqn:DoNotKnow} \totalHyperApproxArg{\tau}{\B^m}(\E^n) & < \Jtotal(\B^m) - \tfrac18t^n\FOOptConst & \forall m > M_1(t^n), \forall n>N_2. \end{align} As $\B^m \rightarrow \B^*$ for $m\rightarrow \infty$ we can choose $M_2(t^n) > M_1(t^n)$ \corr{sufficiently} large s.t. \begin{align*} |\Jtotal(\B^m) - \Jtotal(\B^*)| &< \tfrac{1}{16} t^n \FOOptConst & \forall m > M_2(t^n) , \forall n \in \N. \end{align*} Combining the last two inequalities and noting that \cref{eqn:DoNotKnow} holds for all $\tau$ we can choose $n>N_2$ and $m>M_2(t^n)$ for which the following inequality is satisfied: \begin{align*} \totalHyperApproxArg{\tau_{m}}{\B^m}(\E^n) < \Jtotal(\B^*) - \tfrac{1}{16} t^n \FOOptConst. \end{align*} Here, \corr{$\tau_m$ denotes} the actual proximal point parameter used in the $m$-th outer iteration of \cref{alg:SGP}. Finally noting that $\B^{m+1}$ is the global minimizer of the sub-problem $\min_{\B\in\AdmSet} \totalHyperApproxArg{\tau_m}{\B^m}(\B)$ and taking the inner stopping criterion in \cref{alg:SGP} into \corr{consideration}, we arrive at \[\Jtotal(\B^{m+1}) \leq \totalHyperApproxArg{\tau_{m}}{\B^m}(\B^{m+1}) \leq \totalHyperApproxArg{\tau_{m}}{\B^m}(\E^n) < \Jtotal(\B^*) - \tfrac{1}{16}t^n\FOOptConst.\] This is in contradiction to the monotonicity properties of the sequence of objective function values, i.e. $(\Jtotal(\B^m))_{m\in\N}$. Thus, $\B^*$ is first order optimal in the sense of \cref{def:firstorder}. \end{proof} \subsection{Solution of the subproblem}\label{sec:subprobsolve} In this section \corr{we describe how} the sub-problems in \cref{alg:SGP} can be efficiently solved. In order to do this, we fix the definition of the grayness functional and the parametrization of the feasible set. In this paper we restrict ourselves to \corr{a} grayness function of the following type: \begin{definition}[grayness penalization on the graph $\AdmSet$]\label{defn:grayfun} On $\AdmSet$ we define for all $\B = (B_1,\dotsc,B_\noDesign)\in \AdmSet$ a grayness penalization $\Jgraydisc$ by \begin{align*} \Jgraydisc(\B) & = \sum_{i=1}^K \tilde \Jgraydisc(B_i) & \text{with}&& \tilde \Jgraydisc(B_i) & := \sum_{l\in \indexEdges }\begin{cases} \Param_l^\mo (B_i) (1-\Param_l^\mo (B_i) ) & \text{if } B_i \in \EdgeImage_l\setminus\NodeImage \\ 0 & \text{otherwise}. \end{cases} \end{align*} \end{definition} We note that for parametrizations $(\psi_l)_{l\in \noEdges}$ satisfying the assumptions in \cref{ass:properties}, the directional differentiability on each edge of the parametrization required in \cref{ass:conv} is satisfied for the grayness functional stated in \cref{defn:grayfun}. Next, we reformulate \eqref{eqn:Pjp} in terms of the parametrization $\Param$: \begin{equation}\label{eqn:QC}\tag{\text{$Q^\gamma$}} \begin{aligned} \min_{\b l \in(\indexEdges)^\noDesign} \min_{\b \alpha\in[0,1]^\noDesign} & \ \hyperApprox (\Param_{l_1}(\alpha_1),\dotsc,\Param_{l_\noDesign}(\alpha_\noDesign)) + \gamma \sum_{i=1}^\noDesign \alpha_i (1- \alpha_i)\corr{.} \end{aligned} \end{equation} \begin{lemma} If $( \b \alpha^*,\b l^*)$ is a global optimal solution of \eqref{eqn:QC}, then $(\B)_i = \Param_{l_i^*}(\alpha_i^*)$ for all $i \in \indexDesign$ is a global optimal solution of \eqref{eqn:Pjp}. \end{lemma}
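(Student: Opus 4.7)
The plan is to establish a correspondence between the feasible set of \eqref{eqn:Pjp} and the feasible set of \eqref{eqn:QC}, and to show that the objective values agree along this correspondence; global optimality in one problem then transfers to the other.

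First I would observe surjectivity of the map $\Phi:(\b l, \b\alpha) \mapsto (\Param_{l_1}(\alpha_1),\dotsc,\Param_{l_\noDesign}(\alpha_\noDesign))$ from $(\indexEdges)^\noDesign \times [0,1]^\noDesign$ onto $\AdmSet$. Indeed, for $\B \in \AdmSet = \GraphSet^\noDesign$ each component $(\B)_i$ lies in $\GraphSet = \bigcup_{l\in\indexEdges}\EdgeImage_l$, so there exist $l_i \in \indexEdges$ and $\alpha_i \in [0,1]$ with $(\B)_i = \Param_{l_i}(\alpha_i)$; conversely every such tuple yields an admissible $\B$.

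Second, I would match the objectives. The hyperbolic part $\hyperApprox$ matches by construction. The grayness part reduces to the pointwise identity
\begin{align*}
\tilde \Jgraydisc(\Param_{l_i}(\alpha_i)) = \alpha_i(1-\alpha_i).
\end{align*}
For $\alpha_i \in (0,1)$, the injectivity of $\Param$ on $\indexEdges \times (0,1)$ from \cref{ass:properties} implies that $(l_i,\alpha_i)$ is the unique interior preimage of $(\B)_i$; hence $(\B)_i \in \EdgeImage_{l_i}\setminus\NodeImage$ while $(\B)_i \notin \EdgeImage_l\setminus\NodeImage$ for $l \neq l_i$, so the sum in \cref{defn:grayfun} collapses to the single term $\Param_{l_i}^\mo((\B)_i)(1-\Param_{l_i}^\mo((\B)_i)) = \alpha_i(1-\alpha_i)$. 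For $\alpha_i \in \{0,1\}$ the interpolation property places $(\B)_i$ in $\NodeImage$, so every summand of $\tilde \Jgraydisc$ vanishes by definition, and $\alpha_i(1-\alpha_i)=0$ as well.

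Summing over $i \in \indexDesign$ then yields that $\totalHyperApprox(\Phi(\b l,\b\alpha))$ equals the objective of \eqref{eqn:QC} at $(\b l,\b\alpha)$. Consequently, given a global minimizer $(\b l^*,\b\alpha^*)$ of \eqref{eqn:QC} and any competitor $\tilde\B \in \AdmSet$ with preimage $(\tilde{\b l},\tilde{\b\alpha})$ under $\Phi$, the identity delivers $\totalHyperApprox(\B^*) \le \totalHyperApprox(\tilde\B)$ where $(\B^*)_i := \Param_{l_i^*}(\alpha_i^*)$, so $\B^*$ solves \eqref{eqn:Pjp} globally. The only subtle point is the failure of injectivity at nodes --- several $(l,\alpha)$-pairs may represent the same $B \in \NodeImage$ --- but the identity above shows that the grayness term is invariant under this choice, so the argument is unaffected.
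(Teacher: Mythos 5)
Your proof is correct and takes essentially the same route as the paper's: the paper's one-line argument is precisely that the identity $\tilde \Jgraydisc(\Param_{l_i}(\alpha_i)) = \alpha_i(1-\alpha_i)$ makes \eqref{eqn:QC} a reparametrization of \eqref{eqn:Pjp}, so the global optima coincide. You merely spell out the surjectivity of the parametrization map onto $\AdmSet$ and the interior/node case distinction that the paper leaves implicit.
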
 \begin{proof} Since $\tilde \Jgraydisc(\Param_{l_i}(\alpha_i)) = \alpha_i(1-\alpha_i)$ for all $i\in \indexDesign$ by \cref{defn:grayfun}, \eqref{eqn:QC} is a reparametrization of \eqref{eqn:Pjp}. That is why the global optimal solutions coincide. \end{proof} Due to the separability of $\hyperApprox$ and $\Jgraydisc$, we find the global optimum for \eqref{eqn:QC} if we find the global optimal solution of \begin{equation}\label{eqn:Qil}\tag{\text{$Q^i_l$}} \begin{aligned} \min_{\alpha_i \in[0,1]} & \ \partHyperApproxIR (\Re(\Param_{l}(\alpha_i))) + \partHyperApproxII (\Im(\Param_{l}(\alpha_i)))+ \gamma \alpha_i (1-\alpha_i) \\ \end{aligned} \end{equation} for each element $i\in \indexDesign$ and edge index $l\in \indexEdges$ (see \cref{alg:submultigamm}). \corr{Note that constant terms with respect to $\b\alpha$ are neglected here}. \begin{algorithm}[ht]\caption{solution of \eqref{eqn:Pjp}}\label{alg:submultigamm} \begin{algorithmic}[1] \For {$i\in \indexDesign$} \For {$l \in \indexEdges$} \State $\beta\gets$ solve \eqref{eqn:Qil} globally \State $(\b j^*)_l \gets \partHyperApproxIR (\Re(\Param_{l}(\beta))) + \partHyperApproxII (\Im(\Param_{l}(\beta)))+ \gamma \beta (1-\beta) $ \State $(\b \alpha^*)_l \gets \beta$ \EndFor \State Find index $l^*$ s.t. $(\b j^*)_{l^*} \le (\b j^*)_{l}$ for all $l \in \indexEdges $ \State $B_i \gets \Param_{l^*}((\b \alpha^*)_{l^*})$ \EndFor \end{algorithmic} \end{algorithm} The following result is useful when the desired solution is of \corr{a} discrete nature, \ie $(\B^*)_i \in \NodeImage, \forall i\in \indexDesign$. \begin{remark}\label{rem:discreteSol} For every $\tau>0$ there exists a $\gamma_{\text{max}}>0$ sufficiently large such that for all $\gamma> \gamma_{\max}$ the global optimal solution $(\b l^*_\gamma,\b\alpha^*_\gamma)$ satisfies $\Param((l_\gamma^*)_i,(\alpha_\gamma^*)_i) \in \NodeImage$ for all $i \in \indexDesign$. \end{remark} \begin{proof} Let $v_i((\B)_i) := \partHyperApproxIR( (\B)_i^R)+ \partHyperApproxII((\B)_i^I)$. Since $v_i\circ \Param_{l} \in C^2([0,1])$ for all $l\in \indexEdges$ and $i\in\indexDesign$, its second derivative is bounded from above by $\sigma \in \R$. By choosing $\gamma_\text{max} = \tfrac\sigma2$ the second derivative of $\delta \mapsto v_i(\Param_{l} (\delta)) + \gamma \delta (1-\delta) $ is strictly negative for all $\gamma > \gamma_\text{max}$ for all $l\in \indexEdges$ and $i\in\indexDesign$. Thus the second order optimality conditions are never fulfilled and the global minimum of the latter function restricted to $[0,1]$ is located on the boundary. \end{proof} In the next section, we \corr{provide} strategies to obtain the global optimal solution of \eqref{eqn:Qil} for two particular choices of parametrizations. Moreover, to shorten the notation we define for $s\in\{R,I\}$ \begin{align}\label{eqn:derabbrev} \constParamApproxIsU & := (U^s - \bar B^s_i) \nabla ^{i,s}_+\nonsep(\bar {\B}) (U^s - \bar B^s_i), & \constParamApproxIsL & := (L^s - \bar B^s_i ) \nabla ^{i,s}_-\nonsep(\bar {\B})(L^s - \bar B^s_i), \end{align} since these terms are independent of the design parameters in both cases. \subsubsection{Rotational parametrization}\label{sec:rotparam} \begin{definition}[rotational parametrization]\label{defn:rotparam} Let the so-called reference material tensor $B^{(r)}\in \SC$ be a diagonal tensor. We call a parametrization $\Param$ of material tensors \corr{a} rotational parametrization based on $B^{(r)}$, iff \begin{align*} \Param(\delta) = R(\pi \delta) B^{(r)} R(\pi \delta)^T \qquad \delta\in [0,1] \end{align*} with rotation matrix $R\colon\R \to \operatorname{SO}(2)$: \begin{align*} R(\pi\delta) = \begin{pmatrix} \cos(\pi\delta) & -\sin(\pi\delta)\\\sin(\pi\delta)& \cos(\pi\delta) \end{pmatrix}. \end{align*} \end{definition} \begin{theorem}[global solution, rotational parametrization] For a rotational parametrization $\Param_l$ based on a real-valued diagonal reference tensor $B^{(r)} \in \SR$ \corr{with} asymptotes satisfying the assumptions of \cref{defn:hyper} and $\gamma=0$, the parametrized subproblem \eqref{eqn:Qil} has the global optimal solution \begin{equation*} \alpha^*_i = \frac1{2\pi}\begin{cases} \arctan(-\frac{b}{a}) + \pi & a < 0\\ \arctan(-\frac{b}{a})\mod 2\pi & a > 0\\ \frac\pi2\sign(b)+\pi & a = 0 \end{cases} \end{equation*} with the two parameters \begin{align*} a & = \tau \big[(\bar B_i)_{22}-(\bar B_i)_{11}\big] c_0 + c_{1}, & b &= \tau \big[(\bar B_i)_{12}+(\bar B_i)_{21}\big] c_0 + c_{2}\corr{.} \end{align*} \corr{Here,} \begin{align*} c_0 & = 2(B^L_{11} - B^U_{11}) B^{(r)}_{11} + 2(B^U_{22} - B^L_{22}) B^{(r)}_{22} & \\ & \quad + \big[ (B^U_{11} - B^U_{22})+(B^L_{22}-B^L_{11})\big]\big[(\bar B_i)_{11}+(\bar B_i)_{22}\big]\corr{,}\\ c_{1} & = (B^U_{11}-B^U_{22}) \big[(C_U^{i,R})_{22}-(C_U^{i,R})_{11}\big] +(B^L_{11}-B^L_{22}) \big[(C_L^{i,R})_{22}-(C_L^{i,R})_{11}\big]\corr{,}\\ c_{2} & = (B^U_{11}-B^U_{22}) \big[(C_U^{i,R})_{12}+(C_U^{i,R})_{21}\big] + (B^L_{11}-B^L_{22}) \big[(C_L^{i,R})_{12}+(C_L^{i,R})_{21}\big]\corr{,} \end{align*} and we use the abbreviations $ B^L = (L-B^{(r)})^\mo$, $B^U = (U-B^{(r)})^\mo$. Furthermore, $\sign$ denotes the sign function and $\mod$ the modulo operation, respectively. \end{theorem}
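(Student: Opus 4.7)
The plan is to reduce the one-dimensional subproblem \eqref{eqn:Qil} to a closed-form trigonometric minimisation. Because $B^{(r)}$ is real and $R(\pi\alpha)$ is real orthogonal, the parametrised tensor $\Param_l(\alpha)=R(\pi\alpha)B^{(r)}R(\pi\alpha)^T$ is real symmetric; thus $\Im(\Param_l(\alpha))=0$, the imaginary contribution $\partHyperApproxII$ drops out, and with $\gamma=0$ only $\partHyperApproxIR(B(\alpha))$ remains to be minimised over $\alpha\in[0,1]$.

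\textbf{Step 1 (orthogonal conjugation of the asymptotes).} Since $U=u\mathds{1}$ and $L=l\mathds{1}$ commute with every rotation, one obtains
\begin{align*}
U-B(\alpha)&=R(\pi\alpha)(U-B^{(r)})R(\pi\alpha)^T, & (U-B(\alpha))^{-1}&=R(\pi\alpha)\,B^U R(\pi\alpha)^T,
\end{align*}
and analogously for $L$. This is the structural observation that removes the matrix inverses appearing inside the hyperbolic approximation \eqref{eqn:hyper}.

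\textbf{Step 2 (explicit $\alpha$-dependence).} Using $\langle A,RXR^T\rangle=\langle R^T A R,X\rangle$, every inner product in $\partHyperApproxIR(B(\alpha))$ reduces to a pairing of some rotated matrix $R^T(\cdot)R$ with the diagonal tensor $B^U$ or $B^L$. A direct computation with the double-angle identities shows that for any symmetric $A=(a_{jk})\in\SR$ and diagonal $D$,
\begin{equation*}
\langle R^T A R,D\rangle = \tfrac12(D_{11}+D_{22})(a_{11}+a_{22}) + \tfrac12(D_{11}-D_{22})\bigl[(a_{11}-a_{22})\cos(2\pi\alpha)+2a_{12}\sin(2\pi\alpha)\bigr].
\end{equation*}
Applying this identity to the gradient terms involving $\constParamApproxIRU$ and $\constParamApproxIRL$ yields the $c_1,c_2$ contributions of the theorem, while expanding the proximal term $(B(\alpha)-\bar B_i)^2$ and noticing that $\langle B(\alpha)^2,(U-B(\alpha))^{-1}\rangle=\langle (B^{(r)})^2,B^U\rangle$ is $\alpha$-independent leaves only the cross-term $-2\langle B(\alpha)\bar B_i,(U-B(\alpha))^{-1}\rangle$ and the quadratic $\langle \bar B_i^2,(U-B(\alpha))^{-1}\rangle$ to contribute. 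Repeating the computation for the $L$-asymptote and collecting coefficients produces the compact form
\begin{equation*}
\partHyperApproxIR(B(\alpha)) = \text{const} + a\cos(2\pi\alpha) + b\sin(2\pi\alpha),
\end{equation*}
with $a,b$ exactly the expressions given in the statement once one identifies the $\tau$-weighted $(\bar B_i)_{22}-(\bar B_i)_{11}$ and $(\bar B_i)_{12}+(\bar B_i)_{21}$ factors with the proximal contributions and the $c_1,c_2$ factors with the gradient contributions.

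\textbf{Step 3 (global minimisation).} On the circle $[0,1]$ the function $\alpha\mapsto a\cos(2\pi\alpha)+b\sin(2\pi\alpha)$ has exactly two critical points, characterised by $-a\sin(2\pi\alpha)+b\cos(2\pi\alpha)=0$, i.e.\ $\tan(2\pi\alpha)=b/a$ when $a\neq 0$ and $\cos(2\pi\alpha)=0$ when $a=0$. The second-derivative test selects the global minimum: for $a<0$ the minimiser lies in the branch $\arctan(-b/a)+\pi$; for $a>0$ it lies in the principal branch $\arctan(-b/a)$ (reduced $\mod 2\pi$ to land in $[0,2\pi]$); for $a=0$ the minimiser is $\tfrac\pi2\sign(b)+\pi$. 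Dividing by $2\pi$ yields the claimed $\alpha_i^*$.

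\textbf{Main obstacle.} No analytical difficulty arises beyond careful $2\times 2$ algebra; the real obstacle is the bookkeeping in Step 2, namely organising the expansion of $(B(\alpha)-\bar B_i)^2$ together with the gradient contributions so that the many surviving trigonometric terms collapse into precisely the $a,b$ of the theorem. Once this identification is made, Step 3 is elementary.
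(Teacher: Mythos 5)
Your route is the same as the paper's: exploit the fact that the scalar asymptotes $L=l\mathds1$, $U=u\mathds1$ commute with $R(\pi\alpha_i)$, push the rotations onto the constant matrices via $\langle A,RXR^T\rangle=\langle R^TAR,X\rangle$, use double-angle identities to collapse $\partHyperApproxIR(\Param_l(\alpha_i))$ into a constant plus a single harmonic in $2\pi\alpha_i$, and select the critical point with positive second derivative. Your preliminary observations — that the imaginary contribution vanishes because $\Param_l(\alpha_i)$ is real, and that $\langle B(\alpha)^2,(U-B(\alpha))^{-1}\rangle$ is $\alpha$-independent — are correct and are left implicit in the paper.

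There is, however, a sign inconsistency between your Steps 2 and 3 which, as written, prevents you from recovering the stated formula. With the theorem's $a,b$ the reduced objective is $\mathrm{const}-\tfrac{a}{2}\cos(2\pi\alpha_i)+\tfrac{b}{2}\sin(2\pi\alpha_i)$ — equivalently its derivative is $\pi a\sin(2\pi\alpha_i)+\pi b\cos(2\pi\alpha_i)$, which is the form the paper records — and not $\mathrm{const}+a\cos(2\pi\alpha_i)+b\sin(2\pi\alpha_i)$. The sign is already visible in your own identity: the term $\langle R^TC^{i,R}_UR,B^U\rangle$ contributes $\tfrac12(B^U_{11}-B^U_{22})\big[(C^{i,R}_U)_{11}-(C^{i,R}_U)_{22}\big]$ to the cosine coefficient, i.e.\ $-\tfrac12$ times the corresponding summand of $c_1$, since the theorem orders the difference as $(C^{i,R}_U)_{22}-(C^{i,R}_U)_{11}$. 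As a consequence Step 3 contradicts itself: from $\mathrm{const}+a\cos+b\sin$ the stationarity condition you correctly derive is $\tan(2\pi\alpha_i)=b/a$, yet the minimizers you then quote, $\arctan(-b/a)+k\pi$, are not critical points of that function unless $b=0$; and for $b=0$, $a>0$ your claimed form is minimized at $\alpha_i=\tfrac12$ while the theorem gives $\alpha_i=0$. The repair is mechanical — carry the signs through the expansion to obtain the derivative $\pi a\sin(2\pi\alpha_i)+\pi b\cos(2\pi\alpha_i)$, whose two roots in $[0,1)$ together with the second-derivative test yield exactly the stated branches — but as the proposal stands, the reduction in Step 2 and the branch selection in Step 3 cannot both hold, so the formula for $\alpha_i^*$ is asserted rather than derived.
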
 \begin{proof} First, we compute all stationary points of the hyperbolic approximation \eqref{eqn:hyper}, \corr{thus we} can neglect terms which are constant with respect to material tensors $\B = (B_1,\dotsc,B_\noDesign)$. For a fixed element $i\in \indexDesign$ we obtain with \eqref{eqn:derabbrev} \begin{align*} \partHyperApproxIR (B_i) = & \left\langle \constParamApproxIRU + \tau (B_i-\bar B_i)^2 ,(U - B_i)^\mo\right\rangle + \left\langle \constParamApproxIRL - \tau (B_i-\bar B_i)^2 , (L-B_i)^\mo\right\rangle. \end{align*} Note that $\constParamApproxIRU$ and $\constParamApproxIRL$ are independent on $B_i$. Taking the rotational parametrization into account, \ie $B_i = R(\pi \alpha_i) B^{(r)} R(\pi \alpha_i)^T$, using the choice of asymptotes $L = l \mathds1$ and $U = u \mathds1$ and by properties of the scalar product as well as rotation matrices this can be rewritten as \begin{align*} \mathrlap{ \partHyperApproxIR (R(\pi \alpha_i) B^{(r)} R(\pi \alpha_i)^T) =}\quad & \\ & \left\langle R(\pi \alpha_i)^T \constParamApproxIRU R(\pi \alpha_i) + \tau (B^{(r)} -R^T(\pi \alpha_i)\bar B_iR(\pi \alpha_i))^2 ,(U - B^{(r)} )^\mo\right\rangle \nonumber \\ & + \left\langle R(\pi \alpha_i)^T \constParamApproxIRL R(\pi \alpha_i) - \tau (B^{(r)} -R^T(\pi \alpha_i)\bar B_iR(\pi \alpha_i))^2 ,(L - B^{(r)} )^\mo\right\rangle.\nonumber \end{align*} After straightforward calculus using angle sum identities, the derivative of $\partHyperApproxIR(B_i)$ with respect to the design parameter $\alpha_i$ has the form \begin{align}\label{eqn:statioary} \tot{ \partHyperApproxIR (R(\pi \alpha_i) B^{(r)} R(\pi \alpha_i)^T) }{\alpha_i} & = a \pi\sin(2\pi \alpha_i) + b \pi \cos(2\pi \alpha_i) \end{align} with the coefficients $a$, $b$ as given in the theorem. The stationary points of $\partHyperApproxIR(B_i)$ are given by the two roots of \eqref{eqn:statioary} in the interval $[0,1)$. Now we choose the root for which the second derivative of $\partHyperApproxIR(B_i)$ with respect to $\alpha_i$ is positive. This root is given by the formula for $\alpha_i^*$ \corr{stated} in the theorem. \end{proof}  \subsubsection{Polynomial parametrization}\label{sec:polyparam} In this paragraph, we provide a solution scheme to solve the subproblem \eqref{eqn:QC} if the material tensor is parametrized by a polynomial on an edge of $\GraphSet$. We show that in this case the hyperbolic approximation $\hyperApprox$ is a rational polynomial and solve \eqref{eqn:Qil} by finding the roots of its derivative. \begin{definition}[polynomial parametrization] We call a parametrization $\Param$ of material tensors polynomial parametrization of order $k$, \ie $\Param \in \mathcal P ^k_{\SC}$, if \begin{align*} \Param(\delta) = B^{(1)}(1-\delta) + B^{(2)} \delta + \delta(1-\delta) \sum_{i=0}^{k-2} A_i \delta^i, \qquad \delta \in [0,1] \end{align*} with $B^{(1)}\in \SC$, $B^{(2)}\in \SC$ and interpolation coefficients $A_i\in \SC$, $0\le i \le k-2$. \end{definition} \begin{lemma} \label{lem:numden} Let the parametrization $\Param_l$ be given by a polynomial of order $k$ over ${\SC}$, \ie $\Param_l \in\mathcal P^k_{\SC} $. Then the contribution of the $i$-th element to the hyperbolic approximation \eqref{eqn:hyper}, for $Y:=\Param_l(\delta)$, is \begin{equation*} \partHyperApproxIR (Y^R)+ \partHyperApproxII (Y^I) = \frac{p^i_l(\delta)}{q^i_l(\delta)} \end{equation*} where \begin{equation*} \begin{aligned} p^i_l(\delta) & = \sum_{s\in\{R,I\}} \frac { N_L^{i,s}(\delta) q_l^i (\delta)}{ \det(L^s-Y^s)} + \frac{ N_U^{i,s}(\delta) q_l^i (\delta)}{\det(U^s - Y^s)} \in \mathcal P^{9k}_\R,\\ q^i_l(\delta) & = \prod_{s\in\{R,I\}}\det(L^s-Y^s) \det(U^s - Y^s) \in \mathcal P^{8k}_\R \end{aligned} \end{equation*} and \begin{align*} N^{i,s}_L(\delta) & = \langle \constParamApproxIsL - \tau (Y^s-\bar B_i^s)^2 , \adj(L^s- Y^s)\rangle \\ N^{i,s}_U(\delta) & = \langle \constParamApproxIsU + \tau (Y^s-\bar B_i^s)^2 , \adj(U^s - Y^s)\rangle. \end{align*} \end{lemma}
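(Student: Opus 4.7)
The plan is to exploit the explicit formula for the inverse of a $2\times 2$ matrix via its adjugate, then count degrees.

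First I would unfold the definition of $\partHyperApproxIs$ from \cref{defn:hyper} using the abbreviations \cref{eqn:derabbrev}, so that for $Y = \Param_l(\delta)$ one has for $s \in \{R,I\}$
\begin{align*}
\partHyperApproxIs(Y^s) = \big\langle \constParamApproxIsU + \tau (Y^s - \bar B_i^s)^2,\, (U^s - Y^s)^{\mo}\big\rangle + \big\langle \constParamApproxIsL - \tau (Y^s - \bar B_i^s)^2,\, (L^s - Y^s)^{\mo}\big\rangle.
\end{align*}
Since $L = l\mathds 1$ and $U = u\mathds 1$ with the strict inequalities of \cref{defn:hyper}, the real matrices $L^s - Y^s$ and $U^s - Y^s$ are invertible for every $\delta\in[0,1]$. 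Being $2\times 2$, each admits the adjugate representation $M^{\mo} = \adj(M)/\det(M)$. Because $\det(M)\in\R$ is a scalar, it pulls out of the Frobenius scalar product, yielding
\begin{align*}
\partHyperApproxIs(Y^s) = \frac{N_U^{i,s}(\delta)}{\det(U^s - Y^s)} + \frac{N_L^{i,s}(\delta)}{\det(L^s - Y^s)},
\end{align*}
with $N_U^{i,s}, N_L^{i,s}$ precisely as stated in the lemma. Summing over $s\in\{R,I\}$ gives four fractions whose common denominator is $q^i_l(\delta) = \prod_{s} \det(L^s-Y^s)\det(U^s-Y^s)$; bringing everything to this denominator produces the claimed $p^i_l/q^i_l$ representation.

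It remains to verify the degree bounds. The parametrization $\Param_l$ has polynomial entries of degree at most $k$, so each entry of $L^s-Y^s$ and $U^s - Y^s$ is a real polynomial in $\delta$ of degree at most $k$. The determinant of a $2\times 2$ matrix with entries of degree $k$ has degree at most $2k$; therefore $q^i_l\in \mathcal P^{8k}_\R$ as a product of four such determinants. For the numerator, the adjugate of a $2\times 2$ matrix is built by swapping the diagonal entries and negating the off-diagonal ones, so $\adj(L^s-Y^s)$ and $\adj(U^s-Y^s)$ still have entries of degree at most $k$. The quadratic term $(Y^s - \bar B_i^s)^2$ has entries of degree at most $2k$, so each $N_{L/U}^{i,s}$ is a real polynomial of degree at most $3k$. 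Since $q^i_l/\det(L^s-Y^s)$ is a polynomial of degree at most $6k$ (the product of the three remaining determinants), the contribution $N^{i,s}_L q^i_l/\det(L^s-Y^s)$ lies in $\mathcal P^{9k}_\R$; the same bound holds for the other three summands, so $p^i_l\in \mathcal P^{9k}_\R$.

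The only subtlety I anticipate is confirming that the $2\times 2$ adjugate identity and its interaction with the real scalar product $\langle\cdot,\cdot\rangle$ on $\SC$ are applied correctly; once the determinants are recognized as real scalars that factor out of $\langle\cdot,\cdot\rangle$, the remainder of the argument is a straightforward degree bookkeeping on polynomial matrix expressions.
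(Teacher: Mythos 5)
Your proposal is correct and follows essentially the same route as the paper: unfold the definition of $\partHyperApproxIs$ with the abbreviations \eqref{eqn:derabbrev}, apply the adjugate identity $M^{\mo}\det(M)=\adj(M)$ to each of the four terms, pull the real scalar determinants out of the scalar product, and combine over the common denominator $q^i_l$. The explicit degree bookkeeping you add (which the paper leaves implicit) is also correct.
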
 \begin{proof} Together with the definitions for $\constParamApproxIsU$ and $\constParamApproxIsL$ from \eqref{eqn:derabbrev}, the contribution of the real and imaginary part of $Y:= \Param_l(\delta)$ to \eqref{eqn:hyper} is \begin{align*} \partHyperApproxIs(Y) = \langle \constParamApproxIsL - \tau (Y-\bar B_i^s)^2 , (L^s-Y)^\mo \rangle + \langle \constParamApproxIsU + \tau (Y-\bar B_i^s)^2 , ( U^s-Y)^\mo \rangle. \end{align*} for $s\in\{R,I\}$, respectively. Using the formula $A^{-1}\det(A) = \adj(A)$ for a matrix $A$, we \corr{obtain} \begin{align*} \partHyperApproxIs(Y) = & \det(L^s-Y)^\mo \langle \constParamApproxIsL - \tau (Y-\bar B_i^s)^2 , \adj(L^s-Y) \rangle \\ & + \det(U^s-Y)^\mo \langle \constParamApproxIsU + \tau (Y-\bar B_i^s)^2 , \adj(U^s-Y) \rangle. \end{align*} \corr{Using} the common denominator $q^i_l$, we \corr{have} the expression \begin{equation*} \partHyperApproxIR (Y^R)+ \partHyperApproxII (Y^I) = \frac{p^i_l(\delta)}{q^i_l(\delta)} \end{equation*} with the polynomials $p^i_l(\delta)$ and $q^i_l(\delta)$ \corr{as stated} in the lemma. \end{proof} \begin{algorithm}[ht]\caption{solution of \eqref{eqn:Qil} with polynomial parametrization}\label{alg:polysolve} \begin{algorithmic}[1] \State Find the $n$ real roots $\alpha^{(1)},\dotsc,\alpha^{(n)}$ in the interval $(0,1)$ of the polynomial \begin{equation*} (q^i_l)'(\delta) p^i_l(\delta) - q^i_l(\delta) (p^i_l)'(\delta) + \gamma (1-2\delta) q^i_l(\delta)^2 \end{equation*}\vspace*{-\baselineskip} \State $\b \beta \gets (0,\alpha^{(1)},\dotsc,\alpha^{(n)},1) $ \For{$1\le k \le n+2$} \State $j_k \gets\partHyperApproxIR (\Re(\Param_l(\beta_{k}))) + \partHyperApproxII (\Im(\Param_l(\beta_{k}))) + \gamma \beta_k(1-\beta_k)$ \EndFor \State Find index $n^*$ s.t. $j_{n^*} \le j_k$ for all $1\le k \le n+2 $ \State $\alpha^*_{l} \gets \beta_{n^*}$ \end{algorithmic} \end{algorithm} \begin{theorem} With the assumptions of \cref{defn:hyper} and \cref{lem:numden}, \cref{alg:polysolve} yields a global optimal solution $\alpha^*_l$ of \eqref{eqn:Qil}. \end{theorem}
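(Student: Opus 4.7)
My plan is to argue that Algorithm~\ref{alg:polysolve} enumerates a finite set of candidates that is guaranteed to contain a global minimizer, and then returns the one with smallest objective value.

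First, I would establish that the objective $f(\delta) := \partHyperApproxIR(\Re(\Param_l(\delta))) + \partHyperApproxII(\Im(\Param_l(\delta))) + \gamma\, \delta(1-\delta)$ is well-defined and continuously differentiable on the compact interval $[0,1]$. By Lemma~\ref{lem:numden} the non-penalty part equals $p^i_l(\delta)/q^i_l(\delta)$, so I must verify that $q^i_l(\delta)\neq 0$ on $[0,1]$. This follows from the asymptote assumption in \cref{defn:hyper}: since $\Re(L)\prec \Re(\Param_l(\delta))\prec \Re(U)$ and analogously for imaginary parts for every $\delta\in[0,1]$, each factor $L^s - Y^s$ and $U^s - Y^s$ with $Y=\Param_l(\delta)$ is (sign-)definite and hence its determinant is nonzero. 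Thus $q^i_l$ is strictly positive on $[0,1]$ (after fixing the overall sign), and $f\in C^1([0,1])$.

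Next, since $f$ is continuous on the compact set $[0,1]$, it attains a global minimum $\alpha^*_l\in[0,1]$. Either $\alpha^*_l\in\{0,1\}$, in which case it is one of the boundary candidates appended to $\b\beta$ in Algorithm~\ref{alg:polysolve}, or $\alpha^*_l\in(0,1)$, in which case necessarily $f'(\alpha^*_l)=0$. The main computational step is to show that the polynomial
\begin{equation*}
P(\delta) := (q^i_l)'(\delta)\, p^i_l(\delta) - q^i_l(\delta)\, (p^i_l)'(\delta) + \gamma\,(1-2\delta)\, q^i_l(\delta)^2
\end{equation*}
vanishes exactly at the interior stationary points of $f$. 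Indeed, a direct computation yields
\begin{equation*}
f'(\delta) \,=\, \frac{(p^i_l)'(\delta)\, q^i_l(\delta) - p^i_l(\delta)\,(q^i_l)'(\delta)}{q^i_l(\delta)^2} + \gamma(1-2\delta) \,=\, -\frac{P(\delta)}{q^i_l(\delta)^2},
\end{equation*}
so, because $q^i_l$ does not vanish on $[0,1]$, the zeros of $P$ in $(0,1)$ coincide with the interior critical points of $f$. Since $P$ is a polynomial (of degree at most $16k$ by Lemma~\ref{lem:numden}), it has finitely many real roots, and these are exactly the points $\alpha^{(1)},\dotsc,\alpha^{(n)}$ collected in Line~1 of the algorithm.

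Consequently, the candidate set $\b\beta = (0,\alpha^{(1)},\dotsc,\alpha^{(n)},1)$ contains every global minimizer. Since the algorithm evaluates $f$ at every element of $\b\beta$ and returns the argument with minimal value, the returned $\alpha^*_l$ is a global minimizer of \eqref{eqn:Qil}. The main technical obstacle is the verification that $q^i_l$ is bounded away from zero on $[0,1]$; once this is secured through the definiteness of the asymptote-based factors, the remainder of the argument is a routine continuity/compactness plus first-order optimality argument applied to a rational function whose derivative has polynomial numerator.
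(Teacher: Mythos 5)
Your proposal follows essentially the same route as the paper's proof: express the objective of \eqref{eqn:Qil} as the rational function $p^i_l/q^i_l$ plus the penalty term via \cref{lem:numden}, note that $q^i_l$ cannot vanish on $[0,1]$ because the asymptote condition of \cref{defn:hyper} makes each factor $L^s-Y^s$ and $U^s-Y^s$ definite, conclude that any global minimizer is either a boundary point or an interior root of the polynomial numerator of the derivative, and finish by comparing objective values over the resulting finite candidate set. You are more explicit than the paper about the nonvanishing of the denominator and about existence of a minimizer by compactness, which is welcome.

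However, your key displayed identity does not hold. The quotient rule gives $f'(\delta) = \bigl((p^i_l)'(\delta)\,q^i_l(\delta) - p^i_l(\delta)\,(q^i_l)'(\delta)\bigr)/q^i_l(\delta)^2 + \gamma(1-2\delta)$, so the polynomial whose interior roots are the critical points of $f$ is $(p^i_l)'q^i_l - p^i_l(q^i_l)' + \gamma(1-2\delta)(q^i_l)^2$. The polynomial in Step 1 of \cref{alg:polysolve} is $(q^i_l)'p^i_l - q^i_l(p^i_l)' + \gamma(1-2\delta)(q^i_l)^2$: relative to the numerator of $f'$ the rational part is negated but the penalty part is not, so it equals neither $+$ nor $-$ that numerator once $\gamma>0$, and your claim $f'(\delta) = -P(\delta)/q^i_l(\delta)^2$ is false except when $\gamma=0$. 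Concretely, for $\gamma>0$ the roots collected by the algorithm as printed satisfy $(p^i_l/q^i_l)'(\delta)=\gamma(1-2\delta)$ rather than $(p^i_l/q^i_l)'(\delta)=-\gamma(1-2\delta)$, so the candidate set need not contain the interior stationary points of $f$ and a global minimizer can be missed. This is almost certainly a sign typo in the printed algorithm (the paper's own proof silently assumes the polynomial is the numerator of the derivative), but a correct argument must either correct the polynomial or restrict to $\gamma=0$; asserting the identity by ``direct computation'' papers over the discrepancy instead of resolving it.
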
 \begin{proof} By \cref{lem:numden} the objective functional of \eqref{eqn:Qil} can be expressed as \begin{align*} j^i_l(\delta) := \frac{p^i_l(\delta)}{q^i_l(\delta)} + \gamma \delta (1-\delta). \end{align*} Necessarily, the global minimum of \eqref{eqn:Qil} is located either at $0$, $1$ or a root of the derivative of $j^i_l$. By \cref{defn:hyper}, the denominator of $j^i_l$ has no roots in the interval $(0,1)$, thus the roots of the derivative of $j^i_l$ coincide with the roots of its numerator. The global minimum of \eqref{eqn:Qil} is then selected by comparing the objective functional values for all candidates\corr{,} including the boundary points $0$ and $1$. \end{proof} We finally note that under specific assumptions on the admissible material tensors, the degree of the polynomial of Step 1 of \cref{alg:polysolve} can be significantly reduced. For instance, in the case of isotropic, real-valued materials and linear interpolation the roots of a cubic polynomial \corr{must} be computed. Moreover\corr{,} in the general case\corr{,} to find the roots of a normalized polynomial, one possible approach is to compute eigenvalues of its companion matrix \cite{Edelman1995}.  \section{Examples}\label{sec:example}~ In the following we discuss two different examples \corr{to illustrate this approach}. The purpose of the first example is twofold: First we \corr{wish} to investigate \corr{the performance of \cref{alg:SGP}} for optimization problems involving arbitrarily oriented anisotropic materials. In a second step, we want to examine \corr{the performance of the algorithm} when only a finite subset of orientations is admissible. In particular, we \corr{wish} to study \corr{the extent to which} the quality of the locally optimal solutions \corr{depends} on the finite number of orientations. This is important because as a consequence of \cref{rem:discreteSol} and \cref{lem:fnii} it is clear that for sufficiently large $\gamma$ every element of the set $\NodeImage^\noDesign$ is a local minimum of problem \cref{eqn:opt:probTMdisred}. \par The second example \corr{demonstrates} the capabilities of \cref{alg:SGP} \corr{when} the set of admissible material is parametrized by a complete graph with given complex-valued and isotropic material tensors at the nodes. \corr{To achieve this}, a material distribution is reconstructed by the information covered in the scattered magnetic field. Furthermore the effect of the regularization is investigated.  \subsection{Cloaking of a scatterer} \begin{figure}\centering \scalebox{0.7}{ \includegraphics[]{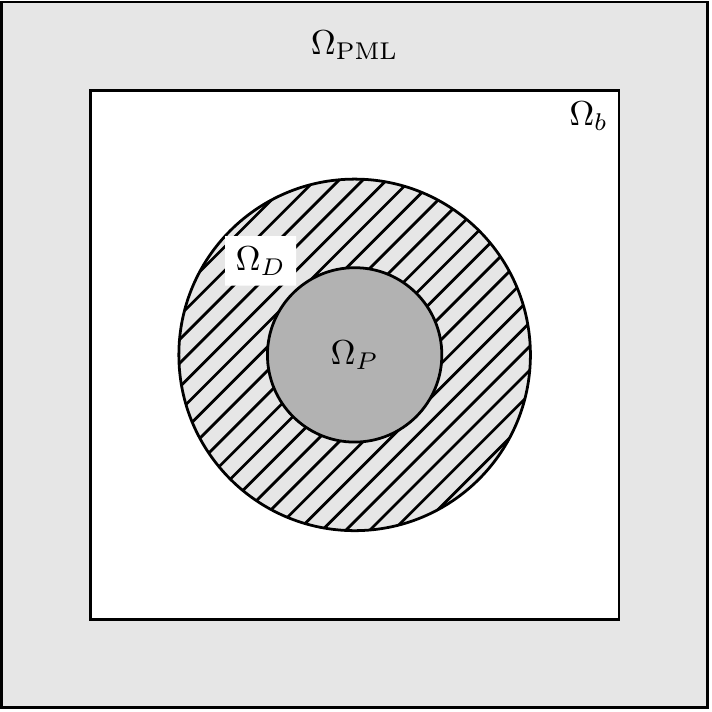} } \caption{Cloaking of a scatterer: Optimization setting}\label{fig:CSopt} \end{figure} The first example aims \corr{to minimize the} visibility of an absorbing core particle $\Omega_P$ \corr{using} optimal local orientation of an anisotropic material in the design domain $\Omega_D$ surrounding the particle (see \cref{fig:CSopt}). We assume that the background material tensor $B_b$ is real valued. \corr{The} visibility of a scattering object can be quantified by the amount of absorbed energy and scattered energy \cite{Mishchenko2014}. The absorbed energy is related to the absorption cross section which is defined as \begin{equation*} W^{\text{abs}} := -\int_{\d U} \frac12 \Re \left( E_T \times H_T^*\right)\cdot n \rmd \omega, \end{equation*} \ie the total energy flow, depending on the total electric $E_T$ and total magnetic field $H_T$, through the boundary of a neighborhood $U$ of the whole scatterer $\Omega_D \cup \Omega_P$, which can be chosen, for instance, as a \corr{ball}. \corr{The} scattered energy is proportional to the scattering cross section which reads \begin{equation*} W^{\text{sca}} := \int_{\d U} \frac12 \Re \left( E \times H^*\right)\cdot n \rmd \omega \end{equation*} with scattered field quantities $E$ and $H$. Note that both values, $W^{\text{ext}}$ and $W^{\text{sca}}$ are independent \corr{of} the choice of $U$ and non-negative. Absorption and scattering can be combined \corr{into} a common value\corr{,} which is called \corr{the} extinction cross section \begin{equation*} W^{\text{ext}} := W^{\text{abs}} + W^{\text{sca}} = \int_{\d U} \frac12 \Re \left( E_S \times H_I^* + E_I \times H_S^*\right)\cdot n \rmd \omega\corr{.} \end{equation*} \corr{This will serve} as the physical objective functional $\Jphys$ in this example after a \corr{number of} further adaptions. First we introduce the tensor-valued function $B_\Omega:\Omega\to \SC$ with \begin{equation*}B_\Omega(x) = \begin{cases} B_C(x) & x \in \Omega_C\\ B(x) & x\in \Omega_D. \end{cases} \end{equation*} Using transverse magnetic assumptions and switching to the two-dimensional setting, the extinction cross section transforms to \begin{align*} \Jphys(B,u) = -\Re \left( \frac\imath{2\omega} \int_{\d U} ( u^*_I (B_\Omega\nabla u + (B_\Omega-B_b)\nabla u_I) + u^* B_b\nabla u_I )\cdot n \rmd \omega \right). \end{align*} Moreover with partial integration and \eqref{eqn:weakbilinear}, we can proceed to a volume integral instead of the boundary integral and arrive at the objective functional in \corr{its} final form: \begin{align}\label{eqn:objTM} \Jphys(B,u) & = - \Re \left (\frac\imath{2\omega} \int_{U} \nabla u_I^H (B_\Omega - B_b) (\nabla u + \nabla u_I) \rmd x\right). \end{align} Note that the integral in the objective functional \eqref{eqn:objTM} can be restricted to $\Omega_D \cup \Omega_C$.  \subsubsection{Optimization problem} We collect the results from the previous sections and give the full optimization problem for this example: \begin{align*} \min_{\B} ~ & \Jphysdisc(\B,\u) + \eta \Jregdisc(\B) + \gamma \Jgraydisc(\B) \\ \st ~ & (\SYS(\B) + \SYS_C) \u = \RHS(\B) + \RHS_C \\ & \corr{\text{where }}\B \text{ is parametrized by rotation angles} \end{align*} with the objective function $\Jphysdisc(\B ,\u) = \Re ( \u^H (V(\B)+V_P) + W(\B) + W_P)$ based on \eqref{eqn:objTM}. The vectors $V(\B),V_P\in \C^\noDoF$ are \corr{defined} element-wise as for all $1\le i \le \noDoF$ \begin{align*} (V(\B))_i & = \frac\imath{2\omega} \sum_{k=1}^{\noDesign} \int_{\TriI k} \nabla u_I^T (B_k^* - B_b^*) \nabla \phi_i \rmd x \\ (V_P)_i & = \frac\imath{2\omega} \int_{\Omega_P} \nabla u_I^T (B_P^* - B_b) \nabla \phi_i \rmd x \\ \intertext{and the scalars $W(\B),W_P\in \C$ are } W(\B) & = \frac\imath{2\omega} \sum_{k=1}^{\noDesign} \int_{\TriI k} \nabla u_I^T (B_k^* - B_b^*) \nabla u_I^* \rmd x \\ W_P & = \frac\imath{2\omega} \int_{\Omega_P} \nabla u_I^T (B_P^* - B^*_b) \nabla u_I^* \rmd x\corr{.} \end{align*} Moreover\corr{,} $\SYS,\SYS_C,\RHS$ and $\RHS_C$ are given in \cref{sec:discrete}. Together with the adjoint variable $\b p$ which solves the adjoint equation \begin{align*} (\SYS(\B) + \SYS_C) \b p = - (V(\B) + V_C)^* \end{align*} we \corr{obtain} the derivative of the physical objective functional in direction $\Y\in \SCK$ for all $k\in \indexDesign$ \begin{align*} \tot{\Jphysdisc(\B, \u(\B))}{(\B)_k}[(\Y)_k] & = \Re\left ( \int_{\TriI k} \Big ( \nabla_{\!\!h} \b p - \frac\imath{2\omega} \nabla u_I^* \Big)^T (\Y)_k \Big(\nabla_{\!\!h} \u + \nabla u_I\Big) \rmd x \right ). \end{align*} Here, $\nabla_{\!\!h} \b u = \sum_{i=1}^\noDoF u_i\nabla \phi_i$ and $\nabla_{\!\!h} \b p = \sum_{i=1}^\noDoF p_i\nabla \phi_i$. Based on this, the sub-problems in \cref{alg:SGP} can be established and solved \corr{using the} strategies discussed in \cref{sec:rotparam}.  \subsubsection{Numerical results} The particle domain $\Omega_P$ with material tensor $B_P = (0.1 + 2\imath)^{-2}\mathds1$ is a \corr{ball} with radius 0.2, see \cref{fig:CSopt}. Inside the design domain $\Omega_D$, which is a \corr{ball} with radius 0.4, the material tensor is optimized with rotational parametrization based on the diagonal reference material tensor $B^{(r)} = \diag(1,2)^{-2}$. A box with a side length of 2 defines $\Omega_b$ and a layer with thickness 1 represents the PML both with material properties $B_b=\mathds1$. \par Furthermore, we choose a plane incident wave $u_I(x,y)= \sqrt2\exp(\imath \omega x)$ with wavelength $\lambda = 0.55,\omega = \frac{2\pi}\lambda$ and set the PML function to $s(t) = 1 - \frac{100}{\imath\omega}\max(0,|t|-1)$. \par The state and adjoint equation are solved \corr{using} the Finite Element Method (FEM) on triangluar cells and implemented in MATLAB \cite{Matlab2014}. We use linear Lagrange basis functions \cite{Zienkiewicz2013} to approximate the scalar fields. The triangulation of the computational domain is generated with the Delaunay triangulation tool Triangle \cite{Shewchuk1996} and \corr{the} triangulation process provides approx. $\num[scientific-notation = fixed]{1e5}$ triangular elements in total and approx. $\num[scientific-notation = fixed]{3e4}$ triangles in the design region $\Omega_D$. \par Since in this example we consider a rotational parametrization (\cref{defn:rotparam}), the admissible set is given as $\AdmSet=\GraphSet^\noDesign$ with $\GraphSet = \{ R(\pi\delta) B^{(r)} R(\pi\delta)^T\mid \delta\in[0,1]\}$. Thus the underlying graph has only one closed edge. The asymptotes for \cref{alg:SGP} are defined by $l=0$ and $u=100$ and thus satisfy the assumptions in \cref{defn:hyper}. We choose the regularization parametrer $\eta=100$, the circular filter radius $r_0 = 0.01$ and the grayness penalty factor $\gamma=0$. \begin{figure}\centering \parbox[b]{0.5cm}{ \includegraphics[height=0.155\textwidth]{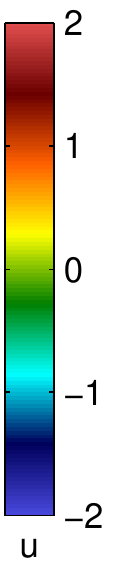}\\\includegraphics[height=0.155\textwidth]{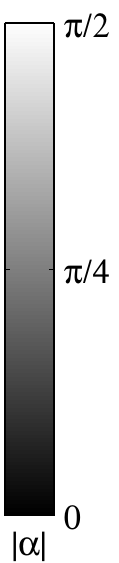}} \subfigure[inital\label{subfig:clk:inital}]{{\includegraphics[width=0.31\textwidth]{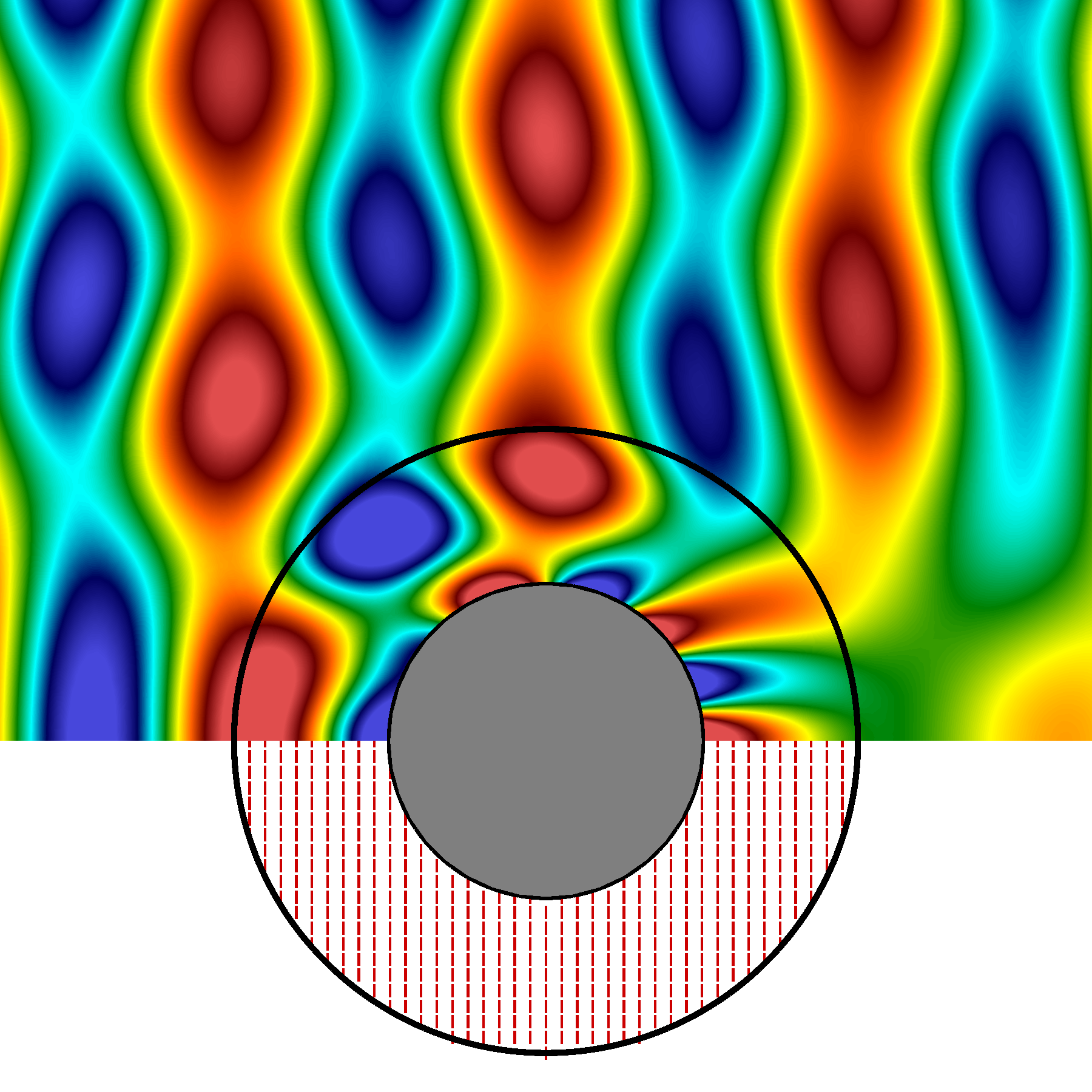}}} \subfigure[optimized\label{subfig:clk:final}]{{\includegraphics[width=0.31\textwidth]{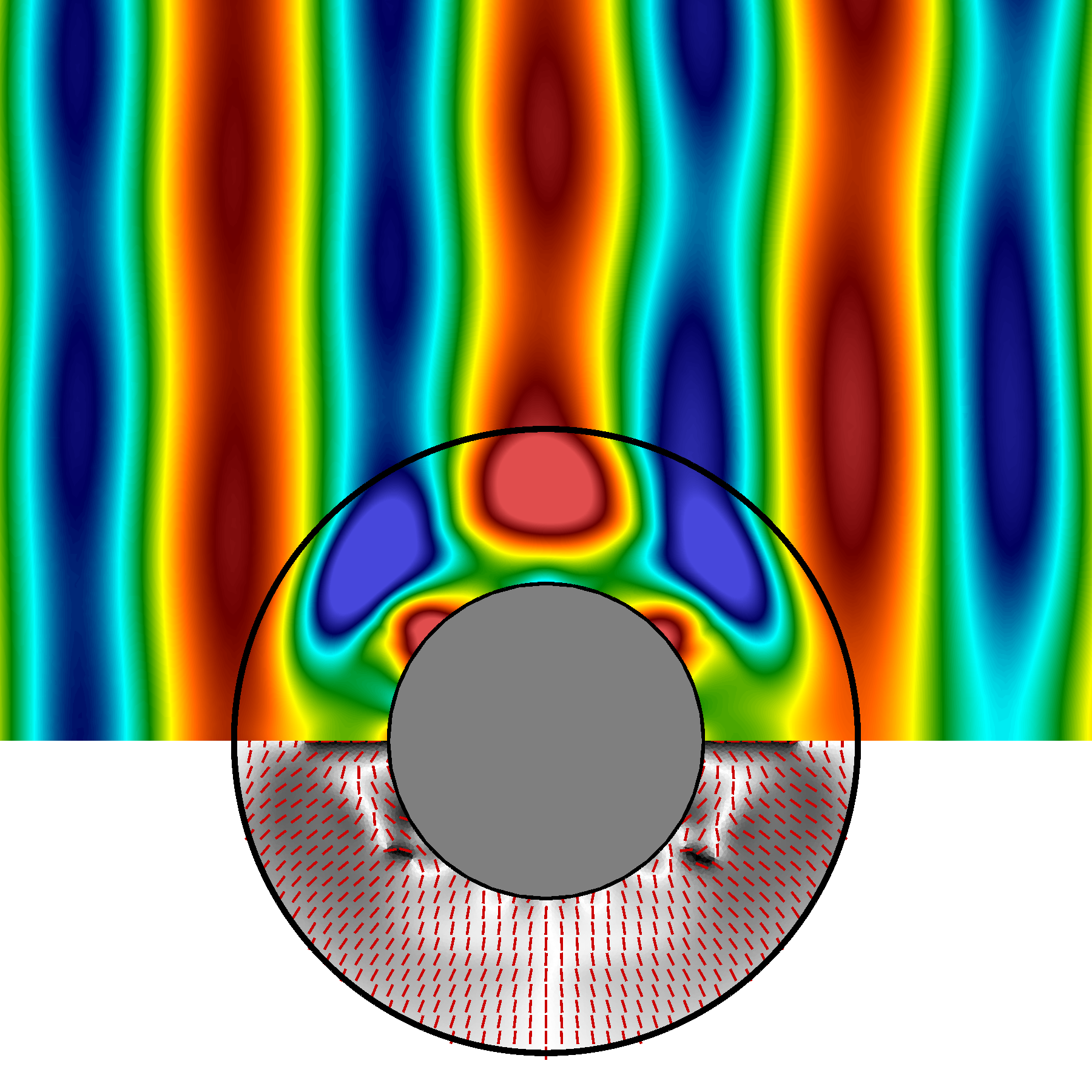}}} \subfigure[close-up \label{subfig:clk:closeup}]{{\includegraphics[width=0.31\textwidth]{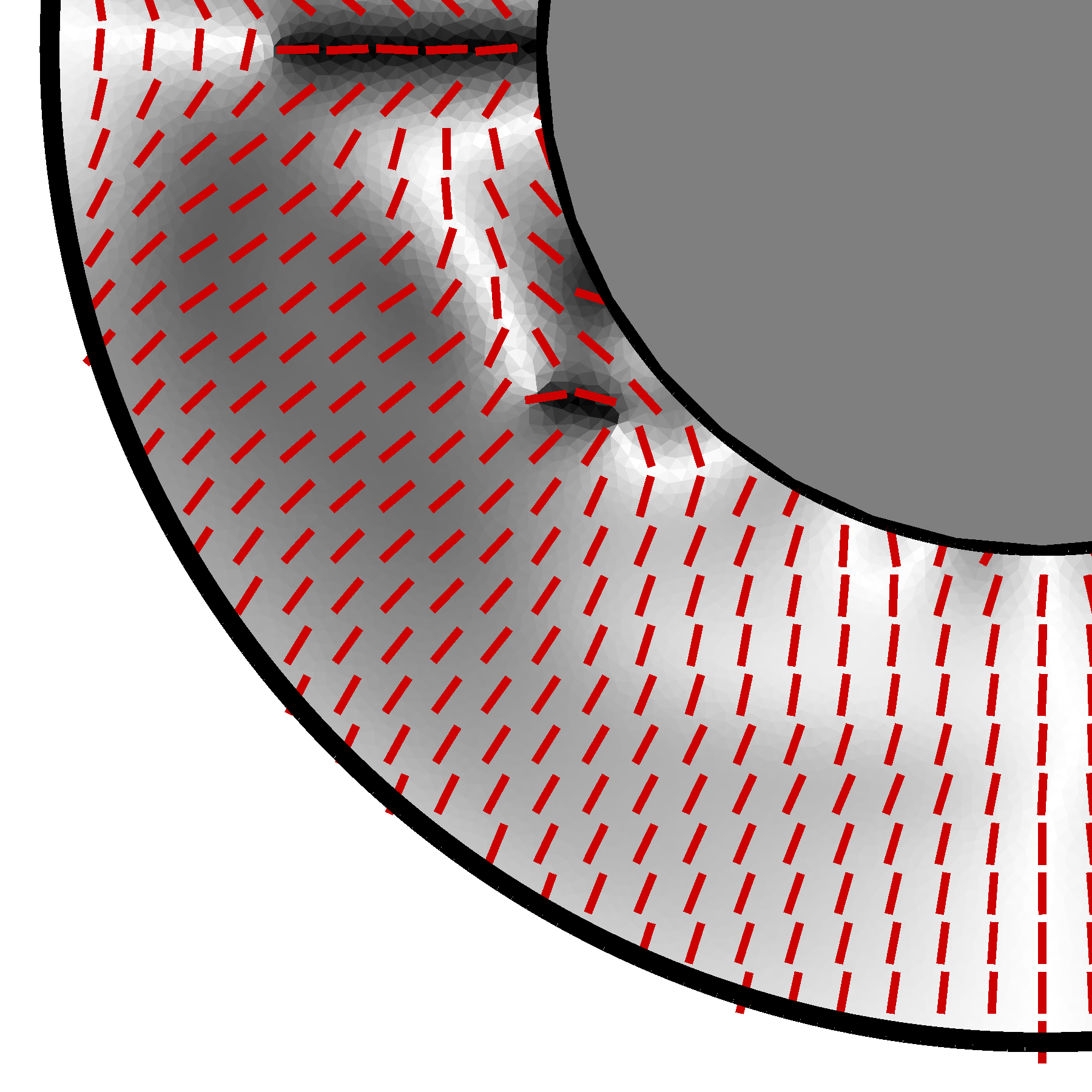}}} \caption{Optimization result for the cloaking of scatterer with anisotropic material} \end{figure} In the upper part of \cref{subfig:clk:inital} and \cref{subfig:clk:final} the total magnetic field is illustrated for the initial and the optimal design configuration, respectively. The backscattering and absorption of the particle including the coating layer is clearly visible for the initial configuration. In the lower \corr{section of both} figures the local orientation of the anisotropic tensor inside the coating layer is visualized. The gray scale colors represent the absolute value of the orientation angle between $0$ and $\frac\pi2$ and the dashes in the closeup (see \cref{subfig:clk:closeup}) support the illustration of the orientation angle. The extinction cross section associated with the optimized anisotropic coating layer is decreased by over $88\percent$ relative to the initial design. \par \corr{As previously mentioned,} we are also interested in \corr{the performance of \cref{alg:SGP}}, if we restrict the design to $L\in\N$ uniformly distributed admissible orientations, \ie $\tfrac{\pi l}{L}$ for all $0\le l\le L-1$. In order to investigate this, the underlying graph of $\GraphSet$ is divided into $L$ edges, \ie $\noEdges=L$, and we obtain the modified admissible set $\AdmSet$ with \begin{align*} \GraphSet = \bigcup_{l=0}^{L-1} \left\{ R\left(\tfrac\pi{L}(\delta+l)\right) B^{(r)} R\left(\tfrac\pi{L}(\delta+l)\right)^T\mid \delta\in[0,1]\right\} \end{align*} and $\NodeImage = \{ R\left(\tfrac{\pi l}{L}\right) B^{(r)} R\left(\tfrac{\pi l}{L}\right)^T\mid 0\le l\le L-1\}$. \corr{To ensure that only points located at nodes are considered}, in theory we could choose a penalty parameter $\gamma>\gamma_{\text{max}}$. \corr{However, as} we know from \cref{rem:discreteSol} that the global minimizer of each sub-problem is located in the set $\NodeImage^\noDesign$ in this case, rather than solving the sub-problem as \corr{described} in \cref{sec:polyparam}, we \corr{can evaluate} the model objective in all nodes and choose the one with the lowest function value for each element. \begin{table}\centering \caption{Progression of the relative cloaking with respect to number of admissible angles}\label{tbl:progression} \begin{tabular}{cc} \toprule number of angles & rel. cloaking \\\midrule 4 & 85.69\percent\\ 12 & 41.35\percent\\ 18 & 36.71\percent\\ 60 & 27.05\percent\\ 180 & 19.80\percent\\ 360 & 12.73\percent\\ continuous & 11.86\percent\\ \bottomrule \end{tabular} \end{table} The relative cloaking after optimization with different numbers of admissible angles is listed in \cref{tbl:progression}. It can be observed that \corr{with an increasing number of orientations, the optimal value of the cost function approaches the optimal value of the continuous problem.} This reveals that despite the \corr{apparent 'brute force'} approach to the solution of the sub-problem, the algorithm is obviously not trapped in local optima introduced by the highly non-convex grayness terms. \begin{figure}\centering \subfigure[18 angles]{ {\includegraphics[,width=0.25\textwidth]{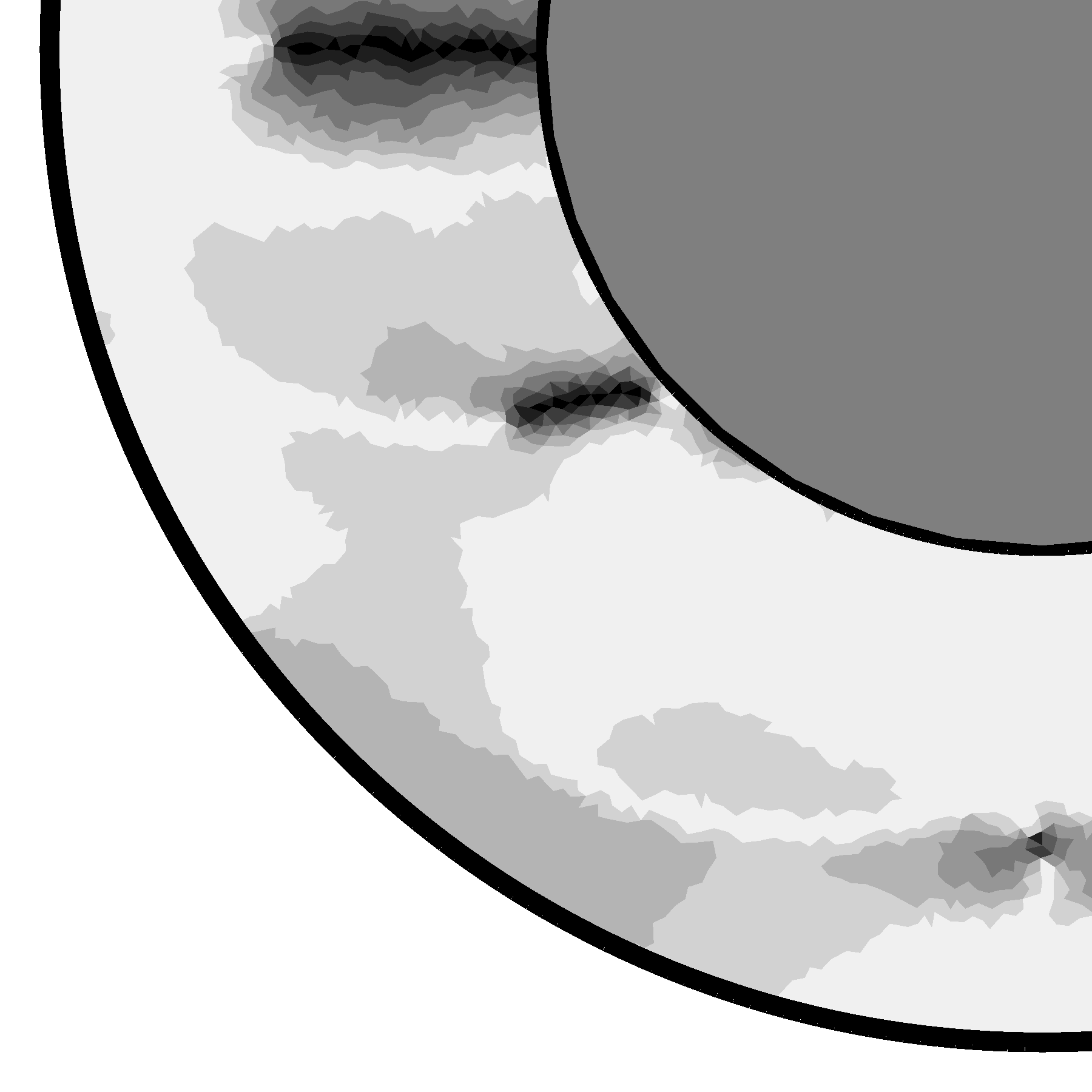}}} \subfigure[180 angles]{ {\includegraphics[,width=0.25\textwidth]{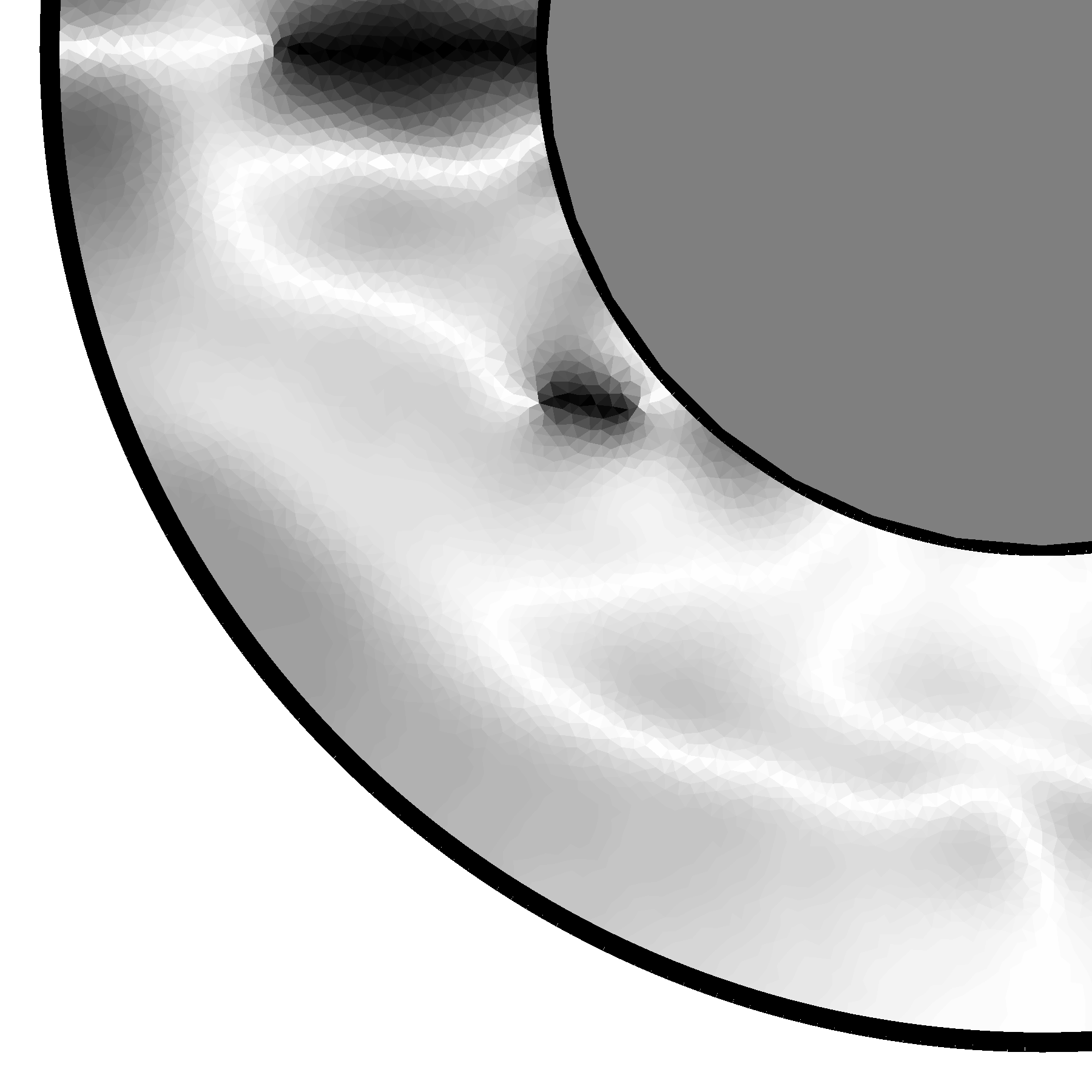}}} \subfigure[continuous]{ {\includegraphics[,width=0.25\textwidth]{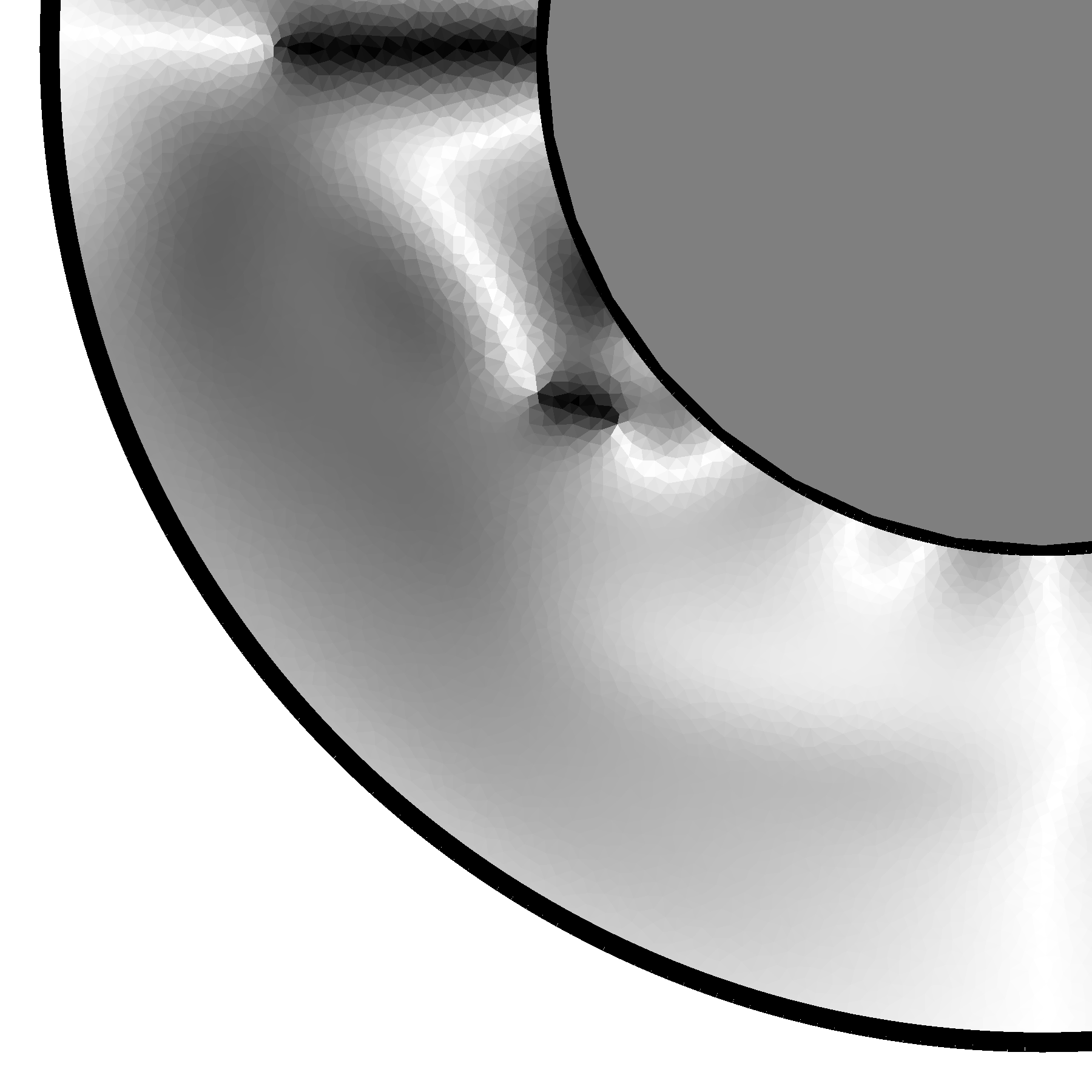}}} \caption{Comparison of optimization results for different number of admissible orientations}\label{fig:contvsdisc} \end{figure} In \cref{fig:contvsdisc} the close up of the optimization results for \corr{18 admissible angle, 180 admissible angles} and the continuous setting are compared.  \subsection{Tomographic reconstruction} In this example we \corr{attempt to} reconstruct a known material distribution by using the electromagnetic field response on the artificial observation boundary $\d U$. \corr{To do this,} we define the physical objective functional following \cite{Cheney1999,Vauhkonen1998} as \begin{equation*} \Jphys(u;u_D) = \int_{\d U} |u(x) - u_D(x)|^2 \dx. \end{equation*} The objective functional measures the distance of a magnetic field $u:\Omega\to\C$ associated with a material configuration $B$ to the magnetic field $u_D:\Omega\to\C$ associated with a reference configuration $B_D$ on the observation boundary $\d U$. After discretization, we obtain \begin{align*} \Jphysdisc(\u; u_D) = \Re\left(\u^H Q \u + 2 \u^H V(u_D) + W(u_D)\right) \end{align*} where $Q\in \R^{\noDoF\times\noDoF},V(u_d)\in \C^{\noDoF}$ and $W(u_D)\in\R$ are defined element-wise for all $1\le i,j \le \noDoF$ by \begin{align*} (Q)_{ij} & := \int_{\d U} \phi_i \phi_j \rmd x, & (V(u_D))_{i} & := \int_{\d U} \phi_i u_D \rmd x, & W(u_D) & := \int_{\d U} |u_D|^2 \rmd x. \end{align*} This functional must be minimized for multiple incident plane waves $u_I(x;\omega,d) = \exp(\imath \omega d \cdot x)$ depending on wave number $\omega$ and direction $d$. Hence, let $\noWaveL$ wave numbers $\omega_1,\dotsc,\omega_\noWaveL$ and $\noDir$ directions $ d_1,\dotsc, d_\noDir$ be given, then the physical objective functional for multiple incident waves $\boldsymbol{J}^p_h$ is \begin{equation*} \boldsymbol{J}^p_h(\u^{11},\dotsc,\u^{\noWaveL\noDir}) = \sum_{l=1}^\noWaveL \sum_{m=1}^\noDir \Jphysdisc(\u^{lm};u^{lm}_D) \end{equation*} where $\u^{lm}$ is the solution of the state equation for wave number $\omega_l$, incident direction $d_m$ with incident wave $u_I(x) = u_I(x; \omega_l,d_m)$ and $u^{lm}_D$ is the corresponding reference solution, \ie $u_D^{lm}(x) = u_D(x; \omega_l,d_m)$.  \subsubsection{Optimization problem} We \corr{again gather} the results of the previous sections, and extend the optimization problem \eqref{eqn:opt:probTMdis} to multiple incident waves: \begin{align*} \left \{ \qquad \begin{aligned} \min_{\B} ~ & \boldsymbol{J}^p_h(\u^{11},\dotsc,\u^{\noWaveL\noDir}) + \eta \Jregdisc(\B) + \gamma \Jgraydisc(\B) & \\ \st ~ &(\SYS(\B) + \SYS_C) \u^{lm} = (\RHS(\B) + \RHS_C) \\ & \text{with } \omega = \omega_l \text{ and } u_I(x) = u_I(x;w_l,d_m)\\ & \corr{\text{where }} \B \text{ is parametrized using polynomial interpolations} \end{aligned}\right. \end{align*} With $\b p^{lm}\in \C^\noDoF$ solving the adjoint equation \begin{align*} (\SYS(\B) + \SYS_C ) \b p ^{lm} & = - 2(Q \u ^{lm} + L(u^{lm}_D))^* \end{align*} for $\omega= \omega_l$, we \corr{are once more able} to give a formula for the derivative of the physical objective in direction $\Y\in\SCK$ for all $k\in\indexDesign$ as follows: \begin{align*} \tot{\boldsymbol{J}^p_h(\u^{11},\dotsc,\u^{\noWaveL\noDir})}{(\B)_k} [(\Y)_k] & = \sum_{l=1}^\noWaveL \sum_{m=1}^\noDir \Re\left ( \int_{\TriI{k}}(\nabla_{\!\!h} \u ^{lm} + \nabla u_I^{lm} )^T (\Y)_k\nabla_{\!\!h} \b p^{lm} \rmd x \right ). \end{align*} As in the previous example, we have used $\nabla_{\!\!h} \b u = \sum_{i=1}^\noDoF u_i\nabla \phi_i$, $\nabla_{\!\!h} \b p = \sum_{i=1}^\noDoF p_i\nabla \phi_i$ and additionally $u_I^{lm}(x) = u_I(x;\omega_l,d_m)$. \corr{With this as a basis}, again sub-problems can be formulated and solved using the techniques described in \cref{sec:polyparam}.  \subsection{Numerical Results} The design domain $\Omega_D$ is a \corr{ball} with radius $0.4$ and is contained in a square with side length $2$. Furthermore a perfectly matched layer of thickness $1$ is used and the PML function $s(t)$ is defined as in the previous example. The scattered electromagnetic field $u_D$ of the reference configuration $B_D$ is calculated on the same mesh, but perturbed \corr{at} every spatial point with a weighted normal probability density function with mean value $0$ and variance $1$. The reference magnetic field $u_D$ is used for the evaluation of the objective function on the observation boundary defined by a sphere with radius $0.8$. Furthermore\corr{,} the scatterer is illuminated from $8$ directions in \SI{45}{\degree} steps and $8$ uniformly distributed wavelengths in the interval $[0.4,0.7]$. \par The set of admissible material tensors is the cyclic graph with three edges ($\indexEdges =\{1,\dotsc,3\}$) and linear interpolation of the isotropic materials $B^{(1)} = \mathds 1$, $B^{(2)} = (2)^{-2}\mathds1$ and $B^{(3)} = (1 +2\imath)^{-2}\mathds1$. \par \begin{figure}\centering \hfill \subfigure[material distribution $B_D$\label{subfig:mattarget}]{ {{\centering \parbox[b][0.25\textwidth][c]{0.5cm}{\includegraphics[height=0.16\textwidth]{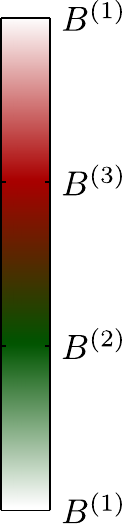}} \includegraphics[height=0.25\textwidth,angle=0]{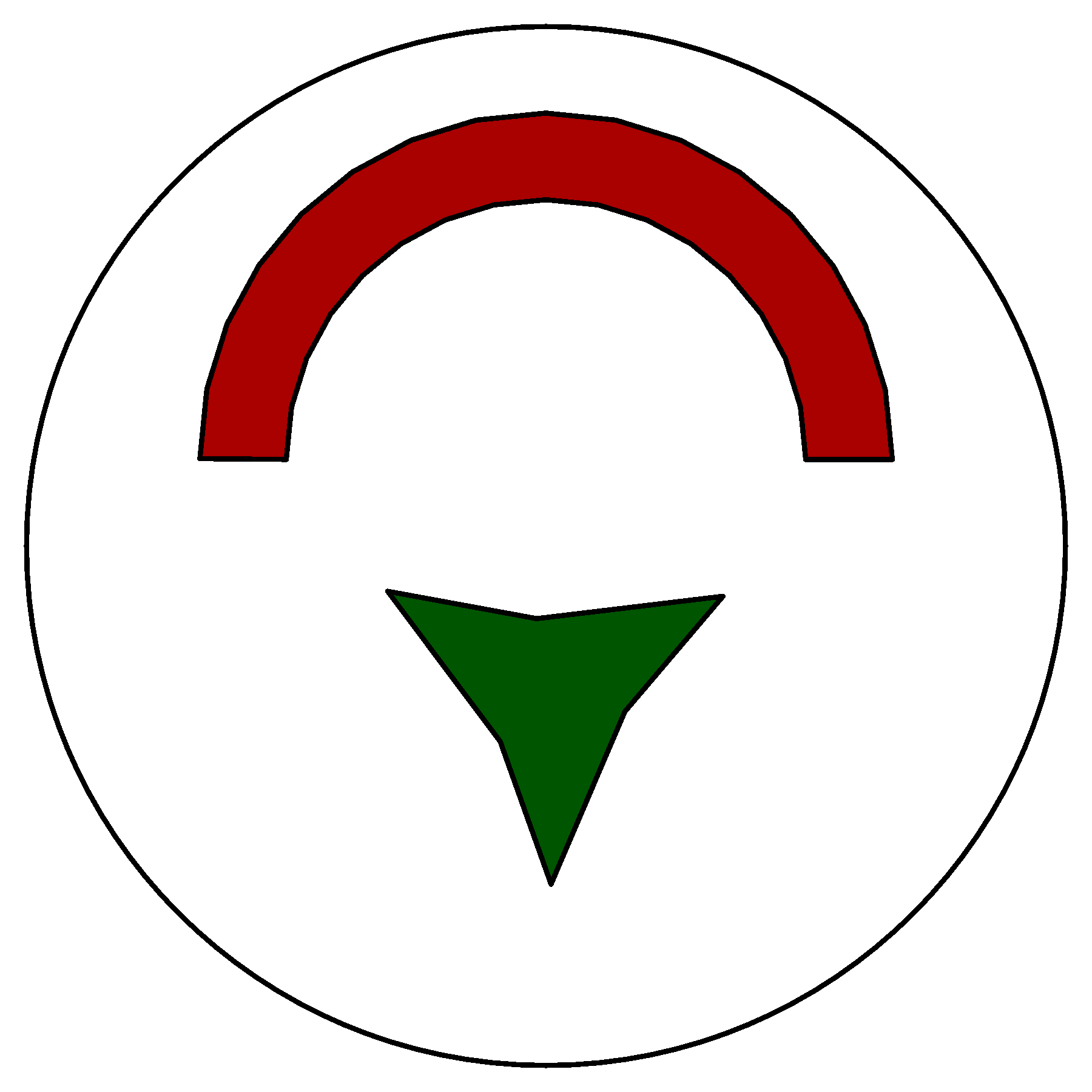}}}} \hfill \subfigure[noisy scattered target field $u_D$\label{subfig:magnettarget}]{ \hspace{1em}{{\centering \parbox[b][0.25\textwidth][c]{0.5cm}{\includegraphics[height=0.16\textwidth]{programresults/cbar_rainbow_-2x2.pdf}} \includegraphics[height=0.25\textwidth,angle=0]{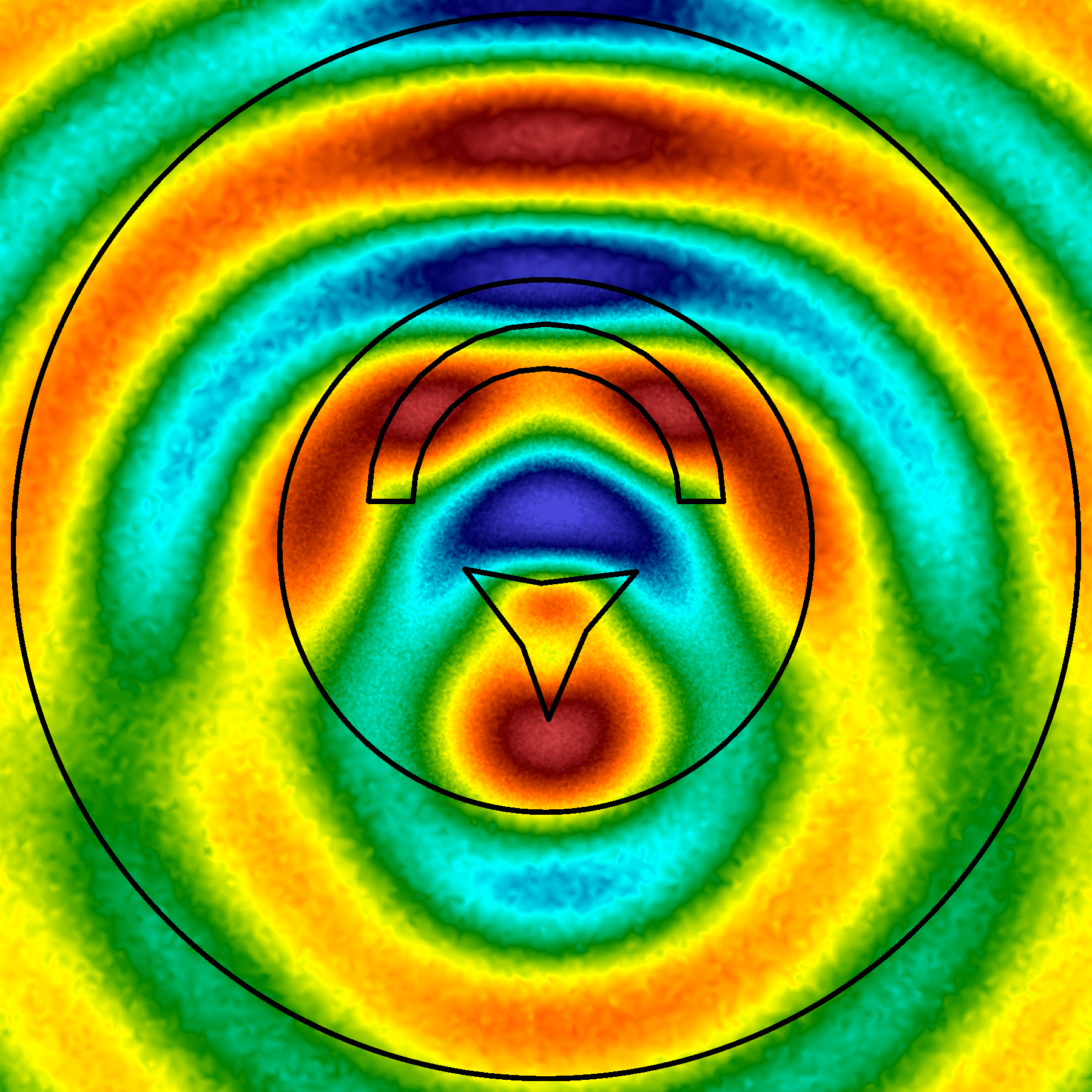}}}\hspace{1em}} \hfill \caption[asd]{Reference material configuration and reference magnetic field including noise.} \end{figure} \par \cref{subfig:mattarget} illustrates the material distribution $B_D$ inside the circular design domain, where white corresponds to $B^{(1)}$, green corresponds to $B^{(2)}$ and red corresponds to $B^{(3)}$, respectively. Moreover the outline of the design domain and material distribution is marked by black lines. The noisy scattered magnetic field for a fixed wavelength and illumination direction is depicted in \cref{subfig:magnettarget}. Here, the \corr{outermost} circle is the observation boundary $\d U$ where the objective functional is evaluated. \par \begin{figure}\centering \hfill \parbox[b][0.25\textwidth][c]{0.5cm}{\includegraphics[height=0.16\textwidth]{programresults/cbar_mat_name.pdf}} \hfill \subfigure[$\eta =0.1$]{\includegraphics[height=0.25\textwidth,angle=0]{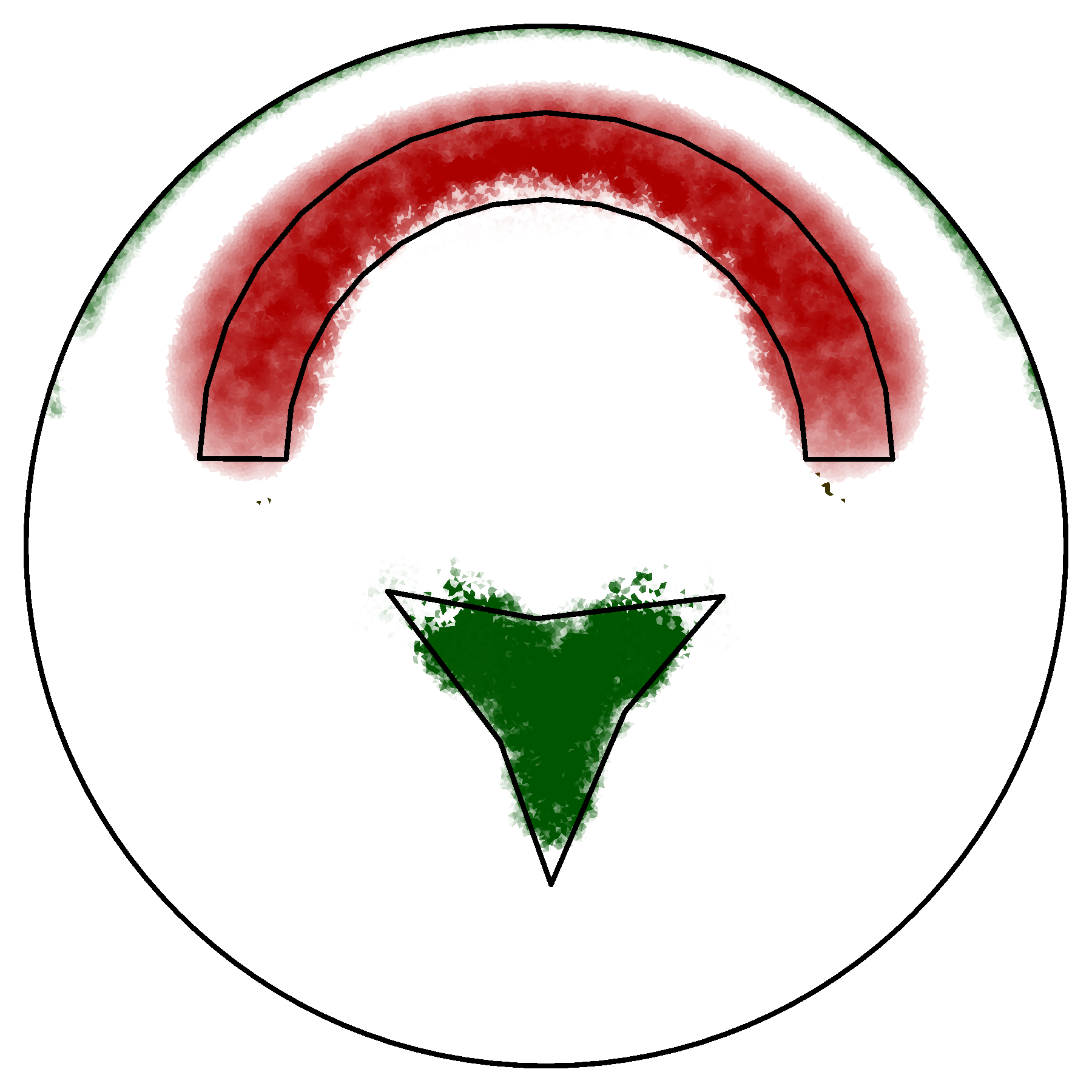}} \hfill \subfigure[$\eta =1$]{\includegraphics[height=0.25\textwidth,angle=0]{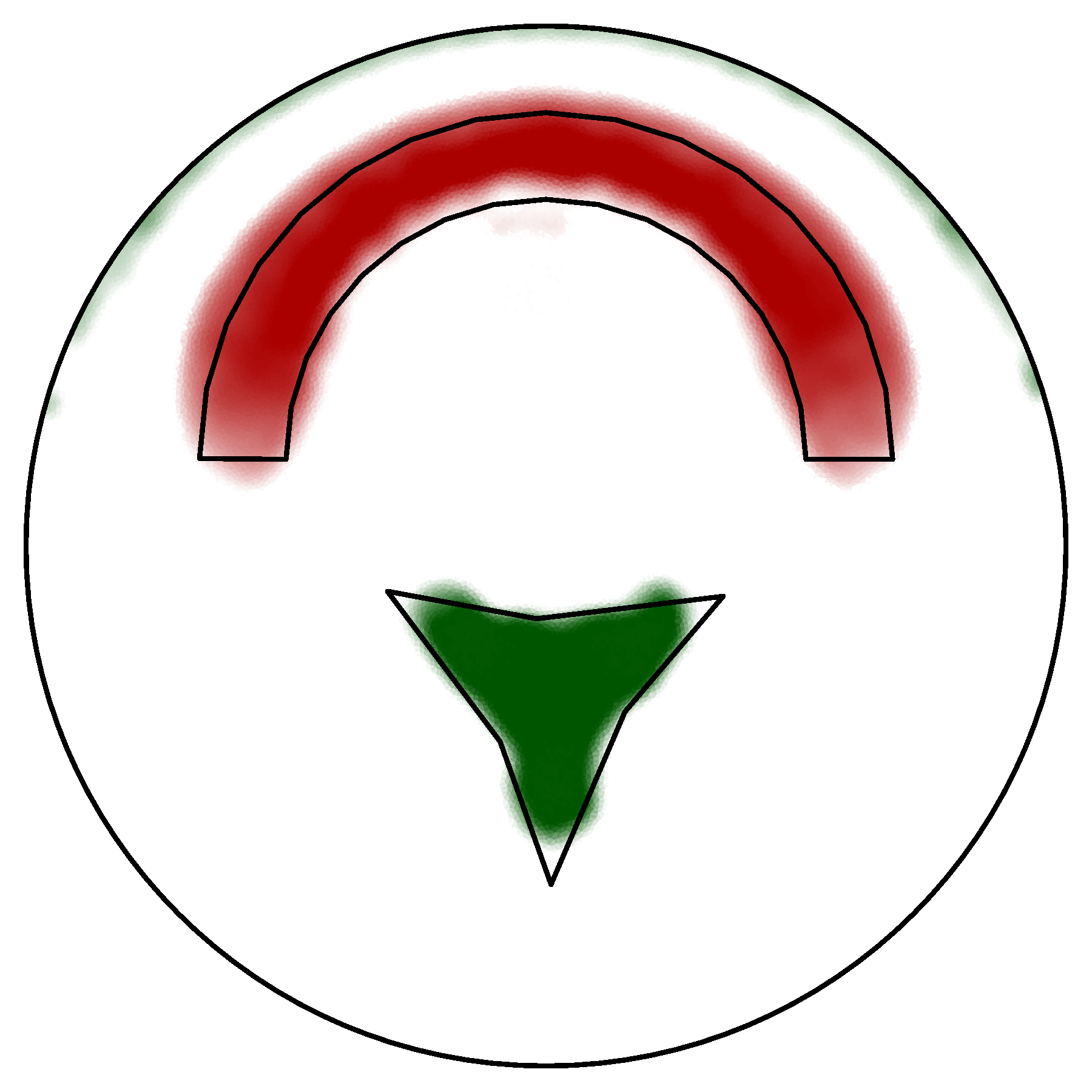}} \hfill \subfigure[$\eta =10$]{\includegraphics[height=0.25\textwidth,angle=0]{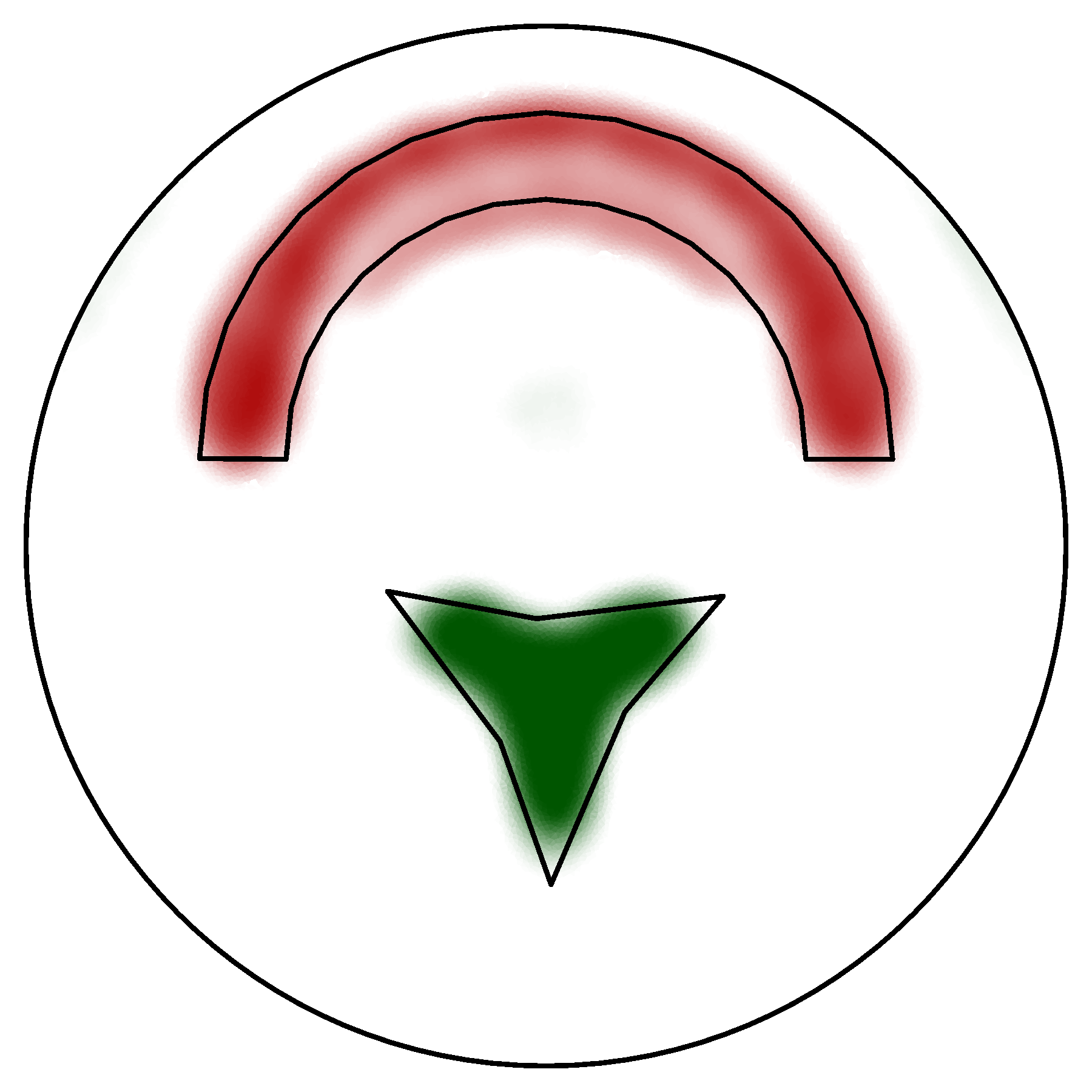}} \hfill \caption[asd]{Effect of filter penalization factor $\eta$ on optimization result after 100 iterations, $\gamma= \num{e-6}$, $r_0 =0.01$, colored by material index\label{fig:regcomp}} \end{figure} \par First we wish to analyze the influence of the regularization. We set the circular filter radius to $r_0=0.01$, choose a grayness parameter of $\gamma = \num{e-6}$ and perform the optimization with various choices of regularization constants $\eta \in \{\num{0.1},\num{1},\num{10}\}$. In \cref{fig:regcomp}, the optimization result after 100 iterations is depicted for all choices; the effect of the regularization becomes apparent at the blurred interfaces. \corr{While the influence of the filter radius was also studied, this is not described here.} \par \begin{figure}\centering \hfill \subfigure[objective functional\label{subfig:objective}]{{\includegraphics[height=0.25\textwidth,angle=0]{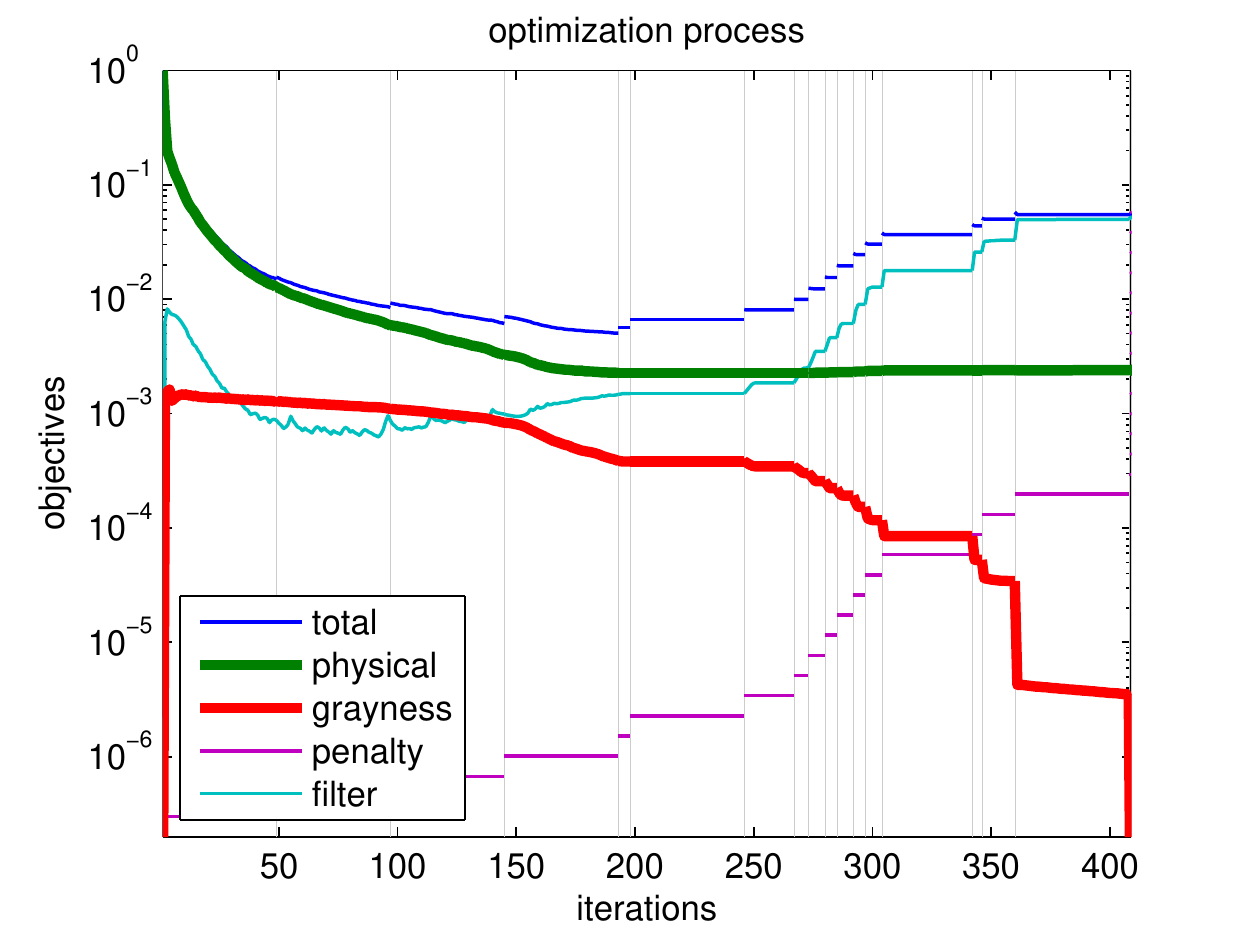}}} \hfill \subfigure[optimization result\label{subfig:optresult}]{ \includegraphics[height=0.25\textwidth,angle=0]{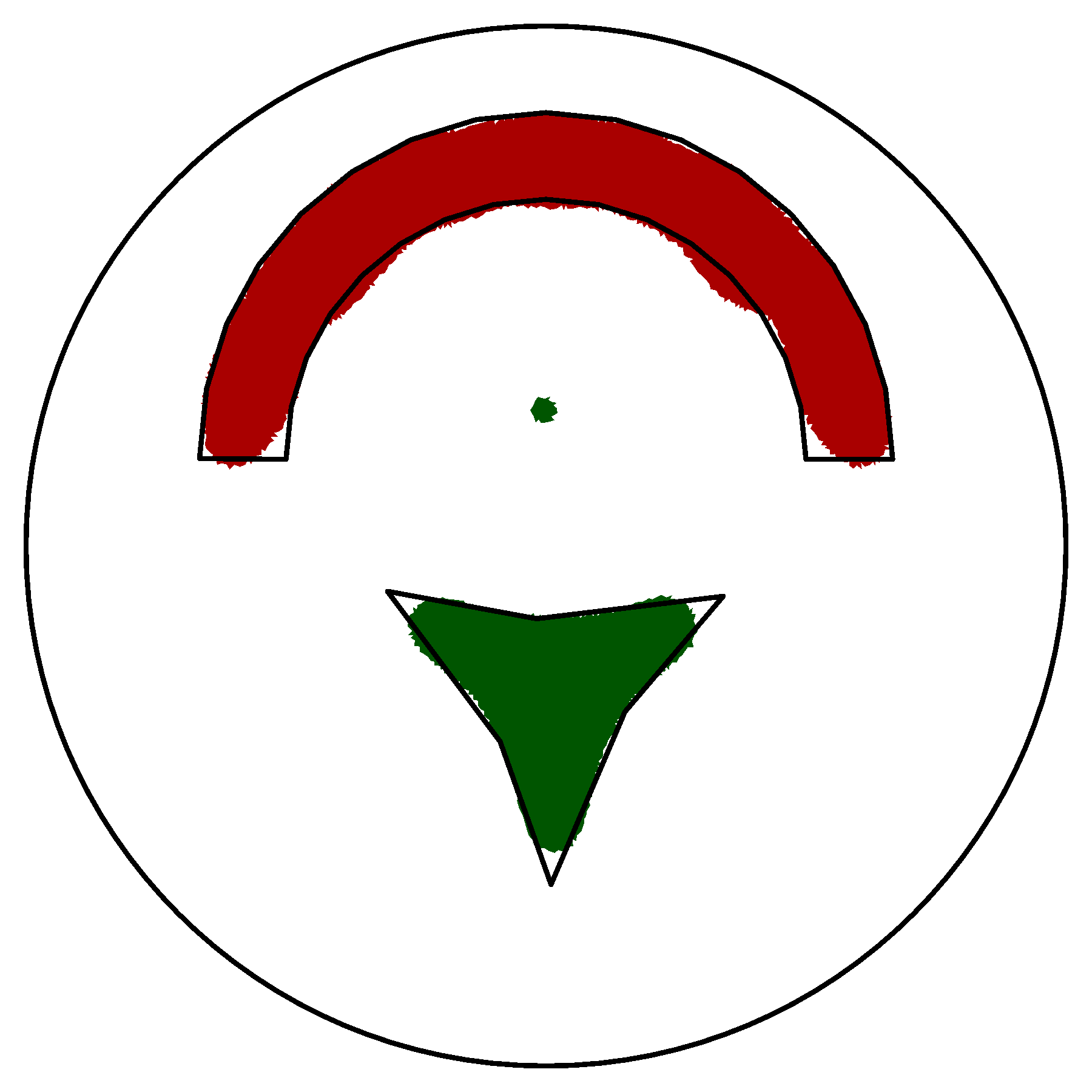}} \hfill \subfigure[magnetic field on $\d U$ \label{subfig:observ}]{{\includegraphics[height=0.25\textwidth,angle=0]{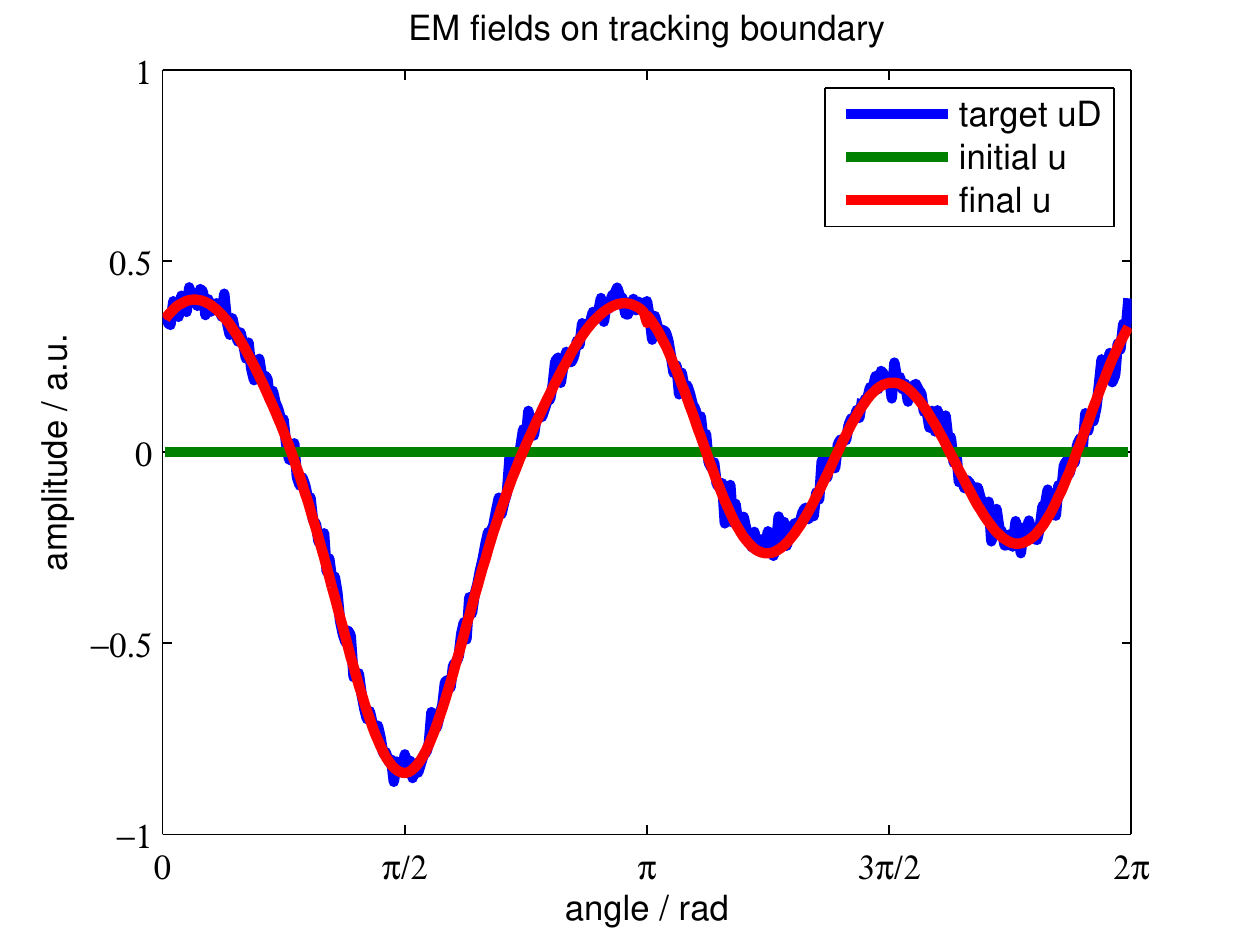}}} \hfill \caption{Evolution of the objective functional with continuation procedure and magnetic field quantities on the observation boundary $\d U$ for single wavelength and illumination direction} \end{figure} \par During the final optimization process\corr{,} a continuation procedure like \corr{that} in \cite{Sigmund1998} was executed, \ie the algorithm \corr{was} repeatedly restarted with an increased grayness penalty factor $\gamma$ and the previous optimization result as starting point. Based on the previous investigations we \corr{fixed} $\eta=1$ and $r_0=0.01$, and \corr{initialized} the material tensor in the design domain with background material. In \cref{subfig:objective} the evolution of the objective functional value, normalized to the inital state, is depicted. The objective functional ({\tt total}) is composed of the extinctional cross section $\b J^p_h$ ({\tt physical}), the regularization term $\Jregdisc$ ({\tt filter}) and the grayness term $\Jgraydisc$ ({\tt grayness}); all contributions are plotted seperately. Due to the increment of the grayness penalty factor $\gamma$ ({\tt penalty}), the total objective functional increases but the physical and grayness contributions \corr{decrease successively}. Additionally\corr{,} the vertical lines in \cref{subfig:objective} mark the continuation procedure where either a predefined maximum number of iterations is exceeded or a stopping criterion is reached. The continuation process is ended when a so-called black/white solution ($\Jgraydisc=0$) is achieved. Thus\corr{,} we finally obtain a material distribution with tensor values corresponding to the nodes of the underlying graph (\cref{subfig:optresult}). The difference between \corr{the} target and optimized magnetic response, \ie the physical objective functional, drops to $0.2\percent$. This can be also qualitively observed in \cref{subfig:observ}, where for a single wavelength and illumination direction the magnetic response $u$ and the noisy target field $u_D$ are compared along the circular observation boundary $\d U$. \par Finally, we study the effect of the filter penalization on the optimization result in combination with the continuation strategy. \corr{ \cref{fig:filterpenopt} depicts the results of the optimization with continuation process, where the filter penalty factor $\eta\in \{0.1,10,100\}$ is varied.} \corr{It is evident} that the overall shape of the scatterer \corr{has been} more or less reconstructed. For the smallest factor $\eta = 0.1$, we see a ragged boundary, whereas for the largest factor $\eta=100$ the filtering was too strong and \corr{dominated} the tracking objective. \par In all cases the sharp features, like the edges of the star, are poorly reconstructed, which may be caused by the low number of wavelengths and illumination directions. \par \begin{figure}\centering \hfill \parbox[b][0.25\textwidth][c]{0.5cm}{\includegraphics[height=0.16\textwidth]{programresults/cbar_mat_name.pdf}} \hfill {\subfigure[$\eta=0.1$]{\includegraphics[height=0.25\textwidth,angle=0]{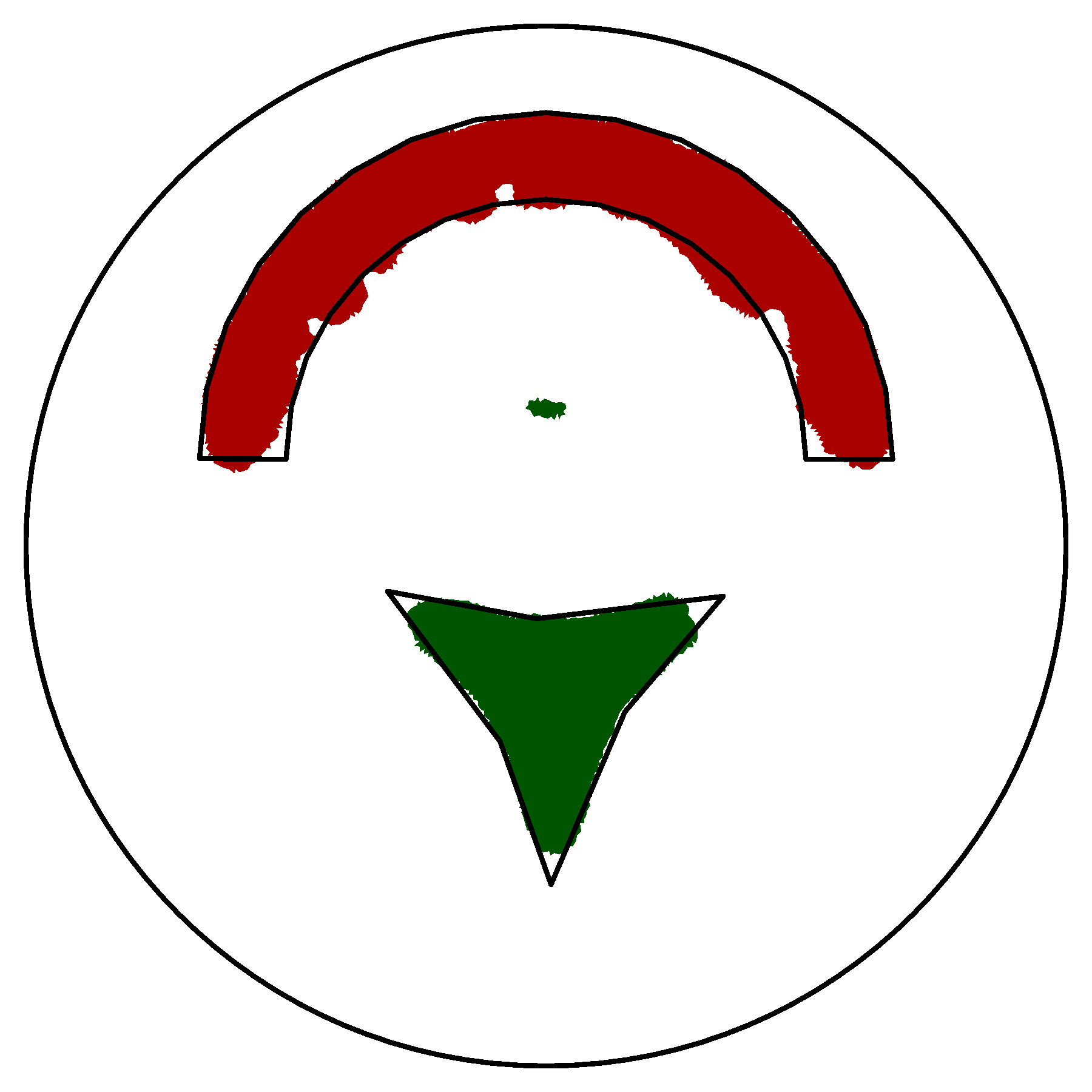}}} \hfill {\subfigure[$\eta=10$]{\includegraphics[height=0.25\textwidth,angle=0]{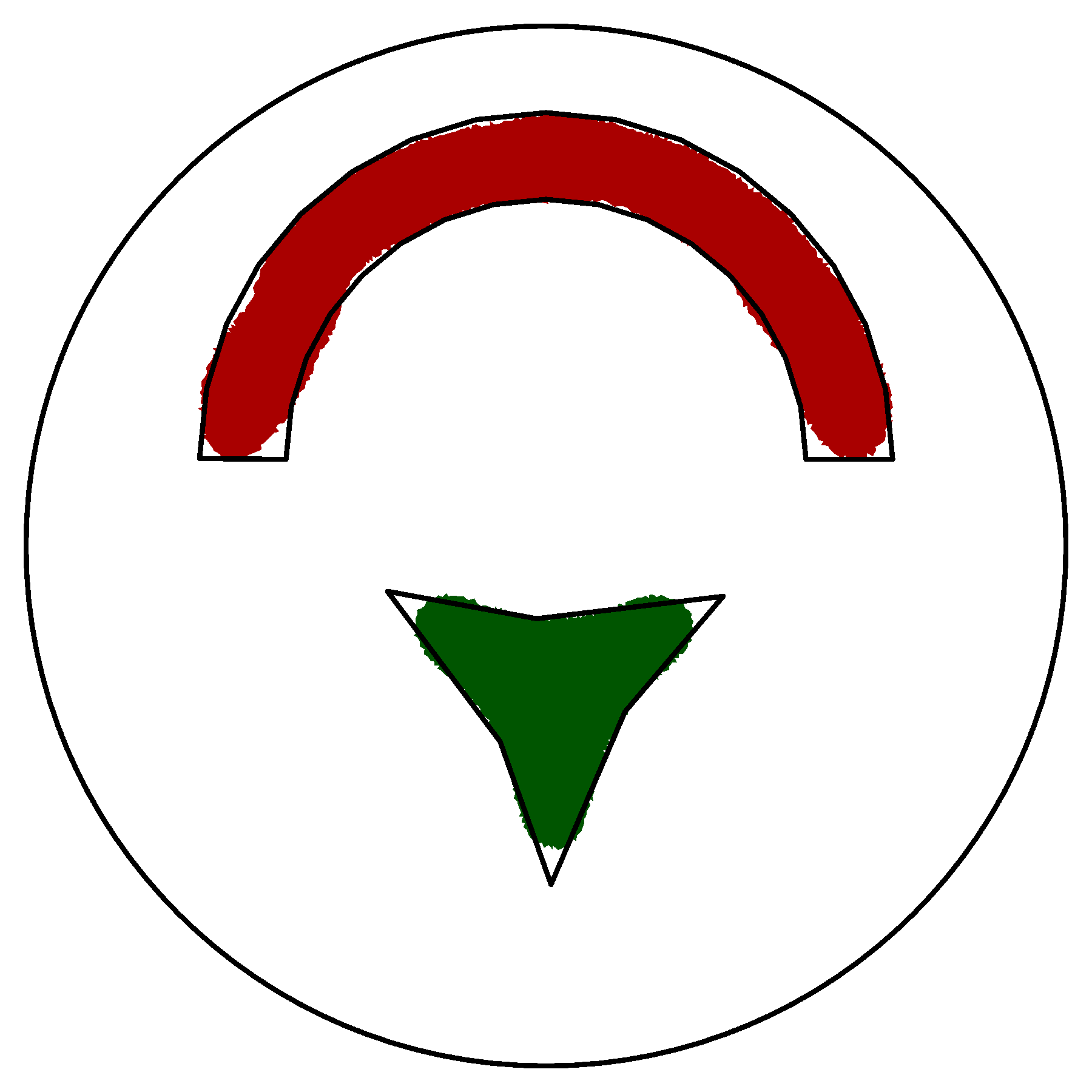}}} \hfill {\subfigure[$\eta=100$]{\includegraphics[height=0.25\textwidth,angle=0]{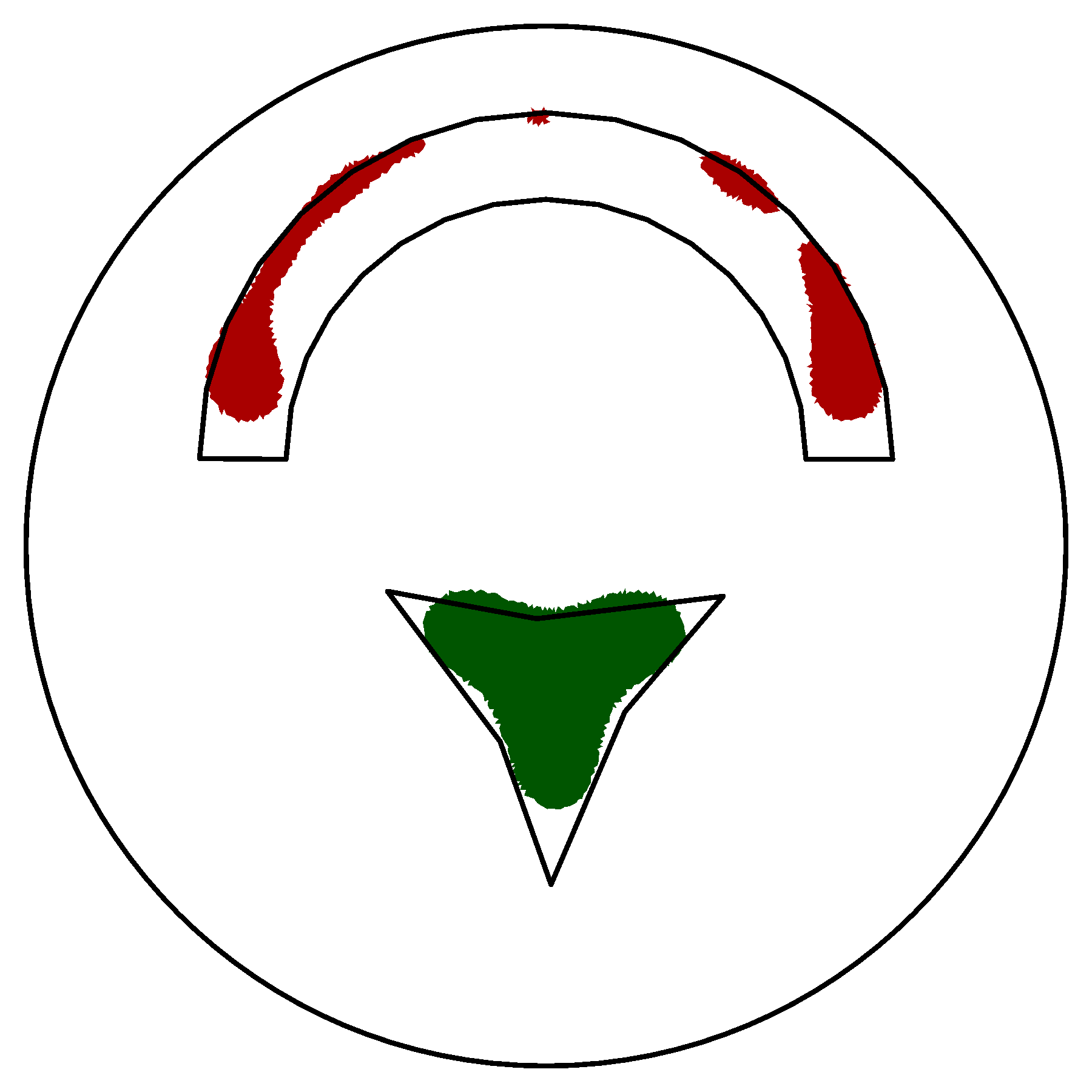}}} \hfill \caption{Optimization result with variation of the filter penalty $\eta$ \label{fig:filterpenopt}} \end{figure}  \section{Concluding Remarks} \corr{We have proposed a new algorithm for the solution of material optimization problems in electromagnetics.} The algorithm is flexible in the sense that it can be applied to material problems of \corr{a} discrete and continuous nature. Theoretical properties of the algorithm \corr{such as} global convergence have been discussed \corr{and} it was further shown in the course of numerical experiments that poor local optima introduced by seriously non-linear parametrizations can be avoided. \corr{An open question remains: how can the improvement of the new algorithm over the application of general purpose optimization algorithms be quantified? This will be investigated in the near future, based on extended numerical experiments and a thorough mathematical investigation of the properties of limit points.} \corr{In terms of} the application side it \corr{would be a natural next step} to apply the concept to three dimensional problems and to \corr{consider} efficient parallelization. The algorithmic concept discussed in the article seems particularly suited for the latter due to the block separable structure of the sub-problem, which \corr{must} be solved in each major iteration. \let\c\oldc\hbadness 20000\relax\bibliography{MOTESA_abbr}\bibliographystyle{siamplain} \end{document}